\documentclass[11pt,reqno]{amsart}

\input{preamble}
\allowdisplaybreaks

\makeatletter
\def\keywords{\xdef\@thefnmark{}\@footnotetext}
\makeatother

\begin{document}
\onehalfspacing

\title[The Kawamata-Morrison Cone Conjecture for GHV]{The Kawamata-Morrison Cone Conjecture for \\Generalized Hyperelliptic Variety}
\author{Martina Monti and Ana Quedo}
\address{Dipartimento di Matematica Federigo Enriques - UNIMI, Via Cesare Saldini, 50, 20133 Milano MI, Italia and  Laboratoire de Mathématiques et Applications - Université de Poitiers, UMR 7348 du CNRS, 11 bd Marie et Pierre Curie, 86073 Poitiers Cedex 9, Frances}
\email{martina.monti1@unimi.it}
\address{Instituto de Matemática Pura e Aplicada - IMPA, Estrada Dona Castorina, 110, Rio de Janeiro, RJ 22460-320, Brazil}
\email{ana.quedo@impa.br}
 \keywords{2010 \emph{Mathematics Subject Classification.} 14C99, 14E20, 14E30, 14J40, 14K99}%
    \keywords{\emph{Key words and phrases.} Cone Conjecture, Generalized Hyperelliptic Manifolds, Convex Geometry}%
\maketitle

\begin{abstract}
    A Generalized Hyperelliptic Variety (GHV) is the quotient of an abelian variety by a free action of a finite group which does not contain any translation. These varieties are natural generalizations of bi-elliptic surfaces. In this paper we prove the Kawamata-Morrison Cone Conjecture for these manifolds using the analogous results established by Prendergast-Smith for abelian varieties.
\end{abstract}

\section{Introduction}
\subsection{The cone conjecture}
The investigation of the Ample Cone and the Cone of curves within algebraic projective varieties is pivotal in the realm of algebraic geometry, in particular within birational geometry. One of the core insights of to the Minimal Model Program is the realization that a wealth of information concerning morphisms from projective varieties to projective spaces is encapsulated in the nef cone. This cone, arising as the closure of the Ample Cone and dual to the Cone of curves, serves as a key focal point.
Given its convex nature, employing tools from Hyperbolic Geometry and Convex Geometry offers avenues for deeper comprehension of it. For instance, Fano varieties exhibit a simplicity in their nef cones since these cones are rational polyhedral cones \cite[Theorem 3.7]{KM98}, \emph{i.e.} their elements are $\mathbb{R}_{\geq 0}$-linear combinations of a finite set of integer vectors.
However, once left the world of Fano varieties, the cones may be very complicated. Of particular interest are varieties with numerically trivial canonical bundles ($K$-trivial varieties), where the nef cone can exhibit different behaviors, ranging from rational polyhedral structures to configurations with infinitely many extremal rays.
Nevertheless, there exists a pursuit for finiteness structure concerning the nef cone.
The Cone Conjecture, proposed first by Morrison \cite{Mor93} and then reformulated by Kawamata \cite{Kaw97}, deli\-neates the precise finiteness behavior of certain cones associated with $K$-trivial varieties. 
The conjecture predicts that for a $K$-trivial projective smooth variety $Y$ such cones can be covered by translates of a rational polyhedral cone by the action of automorphisms of $Y$ via their pull-back.  More precisely, let $N^1(Y)$ be the real vector space spanned by Cartier divisors modulo numerical equivalence. The closed cones $\text{Nef}(Y)$ and $\ol{\text{Mov}(Y)}$ in $N^1(Y)$ are the closures of the cones spanned by the ample and movable divisors, respectively. We denote by $\text{Nef}(Y)^e$ and $\ol{\text{Mov}(Y)}^e$ the intersection $\text{Nef}(Y)\cap \text{Eff}(Y)$ and $\ol{\text{Mov}(Y)}\cap \text{Eff}(Y)$, respectively, where $\text{Eff}(Y)$ is the cone of effective divisors.

\begin{conjecture} [Kawamata-Morrison] \label{conj}
Let $Y$ be a smooth projective variety with $K_Y \equiv 0$. Then:
\begin{romanenumerate}
 \item There exists a rational polyhedral cone $\Pi$ which is a fundamental domain for the action of the automorphism group \textup{Aut}$(Y)$ on the effective nef cone $\textup{Nef}(Y)^e$ in the following sense:
 \begin{itemize}
     \item [a.] $\textup{Nef}(Y)^e=\textup{Aut}(Y)\cdot \Pi$, \emph{i.e.} $\textup{Nef}(Y)^e=\bigcup\limits_{\varphi \in \textup{Aut}(Y) } \varphi^\ast \Pi$,
     \item[b.]  It holds $(\textup{Int}\Pi)\cap \varphi^\ast (\textup{Int}\Pi)=\emptyset$ unless
     $\varphi^\ast=id$ in \textup{GL}$(N^1(X))$.
 \end{itemize}

\item  There exists a rational polyhedral cone $\Pi'$ which is a fundamental domain (in the sense above) for the action of the birational automorphism group \textup{Bir}$(Y)$ on the effective movable cone $\ol{\text{Mov}(Y)}^e$ .   
\end{romanenumerate}
\end{conjecture}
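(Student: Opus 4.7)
\textbf{Proof proposal for Conjecture~\ref{conj} in the case $Y = A/G$ is a generalized hyperelliptic variety.} The plan is to lift everything to the abelian cover $A$, apply Prendergast--Smith's cone conjecture for abelian varieties, and then descend to $Y = A/G$ via the étale quotient $\pi : A \to Y$.

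\emph{Step 1: Descent of cones and groups.} Since $\pi$ is finite étale of degree $|G|$, the projection formula $\pi_*\pi^* = |G|\cdot\mathrm{id}$ together with the Reynolds operator $\tfrac{1}{|G|}\sum_{g\in G} g^*$ shows that $\pi^* : N^1(Y)_{\bR} \to N^1(A)_{\bR}^{G}$ is an isomorphism. Because étale pullback preserves and reflects nefness, effectivity and movability of $\bR$-divisor classes, this identification sends $\mathrm{Nef}(Y)^e$ to $\mathrm{Nef}(A)^e \cap N^1(A)^G$ and $\ol{\mathrm{Mov}(Y)}^{\,e}$ to $\ol{\mathrm{Mov}(A)}^{\,e} \cap N^1(A)^G$. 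Since $A \to Y$ is Galois étale, every $\varphi \in \mathrm{Aut}(Y)$ admits a lift to $\mathrm{Aut}(A)$ that normalizes $G$, unique up to composition with $G$; hence $\mathrm{Aut}(Y) \simeq N_{\mathrm{Aut}(A)}(G)/G$, and similarly $\mathrm{Bir}(Y) \simeq N_{\mathrm{Bir}(A)}(G)/G$. Since $G$ acts trivially on $N^1(A)^G$, the induced actions factor through these quotients.

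\emph{Step 2: Invocation of Prendergast--Smith and invariant-subspace restriction.} By Prendergast--Smith there are rational polyhedral fundamental domains $\Pi_A \subset \mathrm{Nef}(A)^e$ and $\Pi'_A \subset \ol{\mathrm{Mov}(A)}^{\,e}$ for the $\mathrm{Aut}(A)$- and $\mathrm{Bir}(A)$-actions respectively. The technical core of the argument is then a Looijenga-type restriction principle: if a group $\Gamma$ acts on a convex cone $C \subset V$ with rational polyhedral fundamental domain, and $W \subset V$ is a rational subspace stabilized by a subgroup $\Gamma_0 \leq \Gamma$, then $\Gamma_0$ admits a rational polyhedral fundamental domain on $C \cap W$. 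Applied with $W = N^1(A)^G$ and $\Gamma_0 = N_{\mathrm{Aut}(A)}(G)$ (respectively $N_{\mathrm{Bir}(A)}(G)$), and then transported through $\pi^*$, this yields the fundamental domains $\Pi \subset \mathrm{Nef}(Y)^e$ and $\Pi' \subset \ol{\mathrm{Mov}(Y)}^{\,e}$ required by parts (i) and (ii).

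\emph{Main obstacle.} The restriction principle is where the real work lies, and the naïve attempt $\Pi_A \cap W$ does not suffice: for $\gamma \in \Gamma \setminus \Gamma_0$ the slices $\gamma\Pi_A \cap W$ also meet $C \cap W$, and must be repackaged as $\Gamma_0$-translates of a single rational polyhedral region. A plausible route is to exploit the rational polyhedrality of $\Pi_A$ together with the discreteness of the $\Gamma$-action to show that the $\Gamma$-translates of $\Pi_A$ meeting $W$ with positive $W$-dimension fall into finitely many $\Gamma_0$-orbits; picking a transversal then gives a finite rational polyhedral cover of $C \cap W$, from which one extracts a fundamental domain by inductive gluing along rational faces that respects the interior-disjointness condition (b). Secondary technicalities include checking that $\mathrm{Bir}(A) = \mathrm{Aut}(A)$ (standard for abelian varieties, ensuring that birational self-maps of $Y$ truly lift), the faithfulness of the descent of effectivity along $\pi$, and ruling out codimension issues at the boundary of the cones under intersection with $W$.
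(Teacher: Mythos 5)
Your Step 1 (descent of $N^1$, of the cones, and of the automorphism group through the \'etale Galois quotient) matches the paper's Propositions \ref{p1.7} and \ref{conjquotient}, and your observation that part (ii) should reduce to part (i) is also how the paper proceeds (via $\textup{Nef}(Y)=\textup{Eff}(Y)$, Proposition \ref{p1.9}). The gap is in Step 2. The ``Looijenga-type restriction principle'' you invoke --- that a rational polyhedral fundamental domain for $\Gamma$ on $C$ induces one for $\Gamma_0$ on $C\cap W$ whenever $W$ is a rational subspace stabilized by $\Gamma_0$ --- is not a theorem, and it is false in this generality. For example, slicing the cone of positive-definite $2\times 2$ real symmetric matrices (on which $\mathrm{GL}_2(\bZ)$ acts with rational polyhedral fundamental domain) by a suitable rational plane produces a two-dimensional round sector with irrational extremal rays, whose stabilizer in $\mathrm{GL}_2(\bZ)$ is finite; no rational polyhedral fundamental domain can exist there. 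Your proposed repair --- showing that the $\Gamma$-translates of $\Pi_A$ meeting $W$ fall into finitely many $\Gamma_0$-orbits --- is exactly what fails in such examples, so the core of the argument is missing rather than merely technical.

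The paper does not restrict a fundamental domain from $\textup{Nef}(A)$ at all; it re-runs the Prendergast--Smith strategy intrinsically on the invariant cone. Concretely, it shows that the $G$-invariant endomorphism algebra $\textup{End}_{\bR}(X)^G$ is again a product of matrix algebras over $\bR,\bC,\bH$ with a positive-definite involution (Theorem \ref{dec-Ginvarinatalgebra}), hence that $\textup{Amp}(X)^G$ is itself a homogeneous self-dual cone (Theorem \ref{Ginvariantcone}), and that the centralizer $\textup{C}_{\textup{Aut}(X)}(G)$ maps onto an arithmetic subgroup of $\textup{Aut}(\textup{Amp}(X)^G)^0$ (Proposition \ref{cent_arith}). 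Only then can one apply the Ash reduction theorem (Theorem \ref{thm1.1}) and Looijenga's Lemma \ref{thm1.1a} directly to the invariant cone. These structural facts about the invariant subalgebra are precisely the input your restriction principle would need and does not supply; without them the argument does not close.
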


The connections with birational geometry become more evident in this statement. The first item of the conjecture would imply that faces of $\text{Nef}(Y)^e$ corresponding to birational contractions or fiber space structures are finite 
up to automorphisms (see \cite[Section 1]{Tot10}), while the second one would imply, modulo standard conjectures of the Minimal Model Program, the finiteness of minimal models, up to isomorphisms (see \cite[Theorem 2.14]{CL14}).\\
Conjecture \ref{conj} is known as Kawamata's Cone Conjecture \cite{Kaw97}. While the original version, Morrison's version, of the Cone Conjecture \cite{Mor93} is stated for the cone $\text{Nef}(Y)^+$ and $\ol{\text{Mov}(Y)}^+$ which are the convex hull of $\text{Nef}(Y)\cap N^1(Y)_{\bQ}$ and $\ol{\text{Mov}(Y)}\cap N^1(Y)_{\bQ}$, respectively. For the nef cone, the connection between these two different versions of the Cone Conjecture can be explained by the following inclusion $\text{Nef}(Y)^e\subseteq \text{Nef}(Y)^+$ which is know in general for projective smooth $K$-trivial variety $Y$ (see for example \cite[Theorem 2.15]{LOP20}), while the reverse inclusion is still wide open. Specifically, the exact relation is unveiled by a seminal work of Looijenga from Convex Geometry, see Lemma \ref{thm1.1a}, which implies that part (i) of Conjecture \ref{conj} is is equivalent to the following one (see also \cite[Corollary 2.6]{GLW22}).
\begin{conjecture} \label{conj2}
Let $Y$ be a smooth projective variety with $K_Y \equiv 0$. 
\begin{enumerate}
\item There exists a rational polyhedral cone $\Pi$ which is a fundamental domain for the action of the automorphism group \textup{Aut}$(Y)$ on $\textup{Nef}(Y)^+$ (Morrison's version) .
\item $\textup{Nef}(Y)^+=\textup{Nef}(Y)^e$.
\end{enumerate}
\end{conjecture}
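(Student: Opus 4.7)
The plan is to reduce the Cone Conjecture for a Generalized Hyperelliptic Variety $Y = A/G$ to the known result of Prendergast-Smith for the abelian cover $A$, via an equivariant descent combined with the Looijenga-type convex-geometric lemma (Lemma \ref{thm1.1a}) referred to in the excerpt.

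The first preparatory step is to identify the linear-algebraic data on $Y$ with $G$-invariant data on $A$. Since $\pi : A \to Y$ is étale and $G$ contains no translations, pullback yields an isomorphism $\pi^\ast : N^1(Y)_\bR \xrightarrow{\sim} N^1(A)_\bR^G$. Every automorphism of $Y$ lifts to an automorphism of $A$ normalizing $G$ (using the Albanese structure, since $A$ is the Albanese of $Y$ up to isogeny), so the image of $\textup{Aut}(Y)$ in $\textup{GL}(N^1(Y)_\bR)$ corresponds to the image of $N_{\textup{Aut}(A)}(G)/G$. Under these identifications one then checks that $\textup{Nef}(Y)$, $\ol{\textup{Mov}(Y)}$ and $\textup{Eff}(Y)$ are exactly the intersections of the corresponding cones on $A$ with $N^1(A)_\bR^G$: the inclusion ``$\subseteq$'' is immediate from pullback, and the reverse from descent of $G$-invariant divisor classes.

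With these identifications in place, Prendergast-Smith provides a rational polyhedral fundamental domain $\Pi_A \subset \textup{Nef}(A)^+$ for $\textup{Aut}(A)$. Taking a suitable intersection with the $G$-invariant subspace, possibly after collecting finitely many translates by coset representatives so that the resulting seed covers the $G$-invariant part up to $N_{\textup{Aut}(A)}(G)/G$, one produces a rational polyhedral cone $\Pi \subset \textup{Nef}(Y)^+$ whose $\textup{Aut}(Y)$-orbit covers every rational point of $\textup{Nef}(Y)^+$. I would then invoke Lemma \ref{thm1.1a} to simultaneously upgrade $\Pi$ to a genuine fundamental domain and to deduce the equality $\textup{Nef}(Y)^+ = \textup{Nef}(Y)^e$, which is exactly part (2) of Conjecture \ref{conj2}. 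The same scheme, replacing $\textup{Nef}$ by $\ol{\textup{Mov}}$ and $\textup{Aut}$ by $\textup{Bir}$, handles the movable case; here one uses that birational self-maps of $Y$ lift to biregular self-maps of $A$, since $A$ is abelian and hence $\textup{Aut}(A) = \textup{Bir}(A)$.

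The hardest part is the passage from the fundamental domain of $\textup{Aut}(A)$ to one for $\textup{Aut}(Y)$: intersecting $\Pi_A$ with the invariant subspace does not automatically yield a fundamental domain for $N_{\textup{Aut}(A)}(G)/G$, because elements of $\textup{Aut}(A)$ outside $N_{\textup{Aut}(A)}(G)$ can still identify $G$-invariant classes with each other. This is where the Looijenga lemma does the heavy lifting, and careful bookkeeping is needed to preserve rational polyhedrality through the intersection and reassembly. A secondary technical point is the precise identification of the image of $\textup{Aut}(Y)$ in $\textup{GL}(N^1(Y)_\bR)$ with the quotient $N_{\textup{Aut}(A)}(G)/G$, which requires controlling how non-trivial outer automorphisms of $G$ act on classes in $N^1(A)^G$.
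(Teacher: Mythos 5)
Your reduction framework (identifying $N^1(Y)$ with $N^1(X)^G$, $\mathrm{Aut}(Y)$ with $\mathrm{N}_{\mathrm{Aut}(X)}(G)/G$, and finishing with Lemma \ref{thm1.1a}) matches the paper's Proposition \ref{conjquotient} and Corollary \ref{conjGHV}. But the central step --- producing a rational polyhedral cone $\Pi\subset(\mathrm{Nef}(X)^G)^+$ whose translates under a subgroup of $\mathrm{N}_{\mathrm{Aut}(X)}(G)$ cover $\mathrm{Amp}(X)^G$ --- is exactly where your proposal has a genuine gap. You propose to intersect Prendergast--Smith's fundamental domain $\Pi_A$ for the full group $\mathrm{Aut}(X)$ with the invariant subspace and then ``collect finitely many translates by coset representatives.'' This fails for three reasons. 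First, for a $G$-invariant ample class $D$ the element $\varphi\in\mathrm{Aut}(X)$ with $\varphi^\ast D\in\Pi_A$ need not normalize $G$, and $\varphi^\ast D$ need not be $G$-invariant, so $\Pi_A\cap N^1(X)^G$ carries no useful covering information (it can be a face of too-small dimension). Second, $\mathrm{N}_{\mathrm{Aut}(X)}(G)$ generally has infinite index in $\mathrm{Aut}(X)$, so there is no finite set of coset representatives to collect. Third, Lemma \ref{thm1.1a} cannot do this ``heavy lifting'': the existence of a rational polyhedral $\Pi$ with $\Gamma\cdot\Pi\supset C$ is a \emph{hypothesis} of that lemma, not a conclusion; the lemma only upgrades such a covering to a fundamental domain and to the equality $\Gamma\cdot\Pi=C^+$.

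The paper closes this gap not by descending Prendergast--Smith's domain but by rerunning the entire reduction-theoretic argument intrinsically on the $G$-invariant data: it shows that $\mathrm{End}_{\bQ}(X)^G$ is again a finite-dimensional $\bQ$-algebra with a positive-definite involution (using that one may choose the polarization $L$ to be $G$-invariant, so that $g'=g^{-1}$), hence semisimple and isomorphic to a product of matrix algebras over $\bR$, $\bC$, $\bH$ (Theorem \ref{dec-Ginvarinatalgebra}); consequently $\mathrm{Amp}(X)^G$ is itself a homogeneous self-dual cone (Theorem \ref{Ginvariantcone}), and the centralizer $\mathrm{C}_{\mathrm{Aut}(X)}(G)$ maps onto an arithmetic subgroup of $\mathrm{Aut}(\mathrm{Amp}(X)^G)^0$ (Proposition \ref{cent_arith}). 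Ash's theorem (Theorem \ref{thm1.1}) applied directly to this invariant cone then produces the required $\Pi$. If you want to salvage your approach, this is the machinery you would need to rebuild; the intersection-and-translate shortcut does not work. Your treatment of the movable cone and of part (2) is fine in spirit, though the paper gets $\overline{\mathrm{Mov}(Y)}^e=\mathrm{Nef}(Y)^e=\mathrm{Nef}(Y)$ more directly from the fact that effective and nef coincide on abelian varieties and hence on GHVs (Proposition \ref{p1.9}), rather than from a separate argument for $\mathrm{Bir}(Y)$.
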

The idea behind splitting the Cone Conjecture for the nef cone into these parts lies in the fact that, in the setting of Convex Geometry, the convex hull of rational points emerges as a more natural object than the effective cones. A strategy to tackle this conjecture entails deploying general tools of Convex Geometry to establish part (1) of Conjecture \ref{conj2} (Morrison's version), followed by a deeper investigation into the geometric properties of the varieties to verify part (2) of Conjecture \ref{conj2}.
It is worth noting that a more generalized version of this conjecture exists for klt Calabi-Yau pairs, see \cite{Tot10}.\\
Over the years, the conjecture has spurred a flurry of research activity, leading to signi\-ficant advancements and conjectural extensions.
Conjecture \ref{conj} has been validated in numerous instances. In the realm of dimension $2$, pioneering proofs were initially provided by Sterk \cite{St85}, Namikawa \cite{Nam85}, Kawamata \cite{Kaw97}, and Totaro \cite{Tot10}. In higher dimensions, seve\-ral results have been established for distinct classes of varieties. Notably, Prendergast-Smith \cite{Pr12} verified the conjecture for abelian varieties, while Amerik-Verbistky \cite{Mark11}, \cite{AV17}, \cite{AV20} extended it to IHS manifolds. Moreover, albeit in very specific cases, the conjecture has been demonstrated for Calabi-Yau manifolds (see \cite{LP13}, \cite{Og14}, \cite{OY15}, and \cite{LOP17}).

\subsection{The cone conjecture under étale quotients}
The Beauville-Bogomolov decomposition theorem \cite{Beau83a} states that all complex projective manifolds $Y$ with numerically trivial canonical bundle admit a finite étale cover $X\rightarrow Y$ isomorphic to the product of abelian varieties, simply connected Calabi-Yau manifolds and Irreducible Holomorphic Symplectic Manifolds (IHSM). This prompts a natural question: if Conjecture \ref{conj} holds true for $X$, can we infer its validity for $Y$? Pacienza and Sarti in \cite{PS23} provided an affirmative response for coverings of prime degree of IHS type, in these cases the resulting manifolds are called Enriques manifolds. In this article, we explore the scenario where the cover $X$ is an abelian variety. More precisely, a \emph{Generalized Hyperelliptic Variety (GHV)} is the quotient $Y=X/G$ where $X$ is an abelian variety and $G\le \text{Aut}(Y)$ is a finite group acting freely on $X$ without any translations, (see Remark \ref{GHV2}). By construction, GHVs have an infinite fundamental group, and their canonical bundle is numerically trivial. These varieties, first introduced by Lange \cite{La}, are natural generalizations of bi-elliptic surfaces. Subsequently, various mathematicians have continued to investigate these mani\-folds, as evidenced by works \cite{CC}, \cite{CD2}, \cite{De}, and \cite{M23}. 
It is important to note that there is a full classification of these varieties up to dimension $4$: for surfaces the seminal works of Ba\-gnera and de Franchis \cite{BdF}, Enriques and Severi \cite{ES09}, \cite{ES10};  for threefolds Lange \cite{La} and Catanese and Demleitner \cite{CD2}; for fourfolds Demleitner \cite{De}. In particular, their existence is guaranteed in all dimensional cases, see \cite{La} and \cite{Vi21}.\\

\noindent The principal goal of this article is to establish Conjecture \ref{conj2} for the wide class of GHVs. Specifically, our main result is the following.

\begin{mainth}\label{main1}
Let $Y=X/G$ be a Generalized Hyperelliptic Variety. Then, part (i) of Conjecture \ref{conj} is verified and $\ol{\textup{Mov}(Y)}^e=\textup{Nef}(Y)^e=\textup{Nef}(Y)^+=\textup{Nef}(Y)$. In particular, also part (ii) of Conjecture \ref{conj} is verified.
\end{mainth}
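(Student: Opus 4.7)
My plan is to pull back everything to the abelian cover $\pi\colon X\to Y$, apply Prendergast-Smith's theorem for abelian varieties, and descend via $G$-invariants. First, pullback yields an isomorphism $\pi^*\colon N^1(Y)_{\bR}\xrightarrow{\sim} N^1(X)_{\bR}^G$, and a standard $G$-averaging argument (using $\pi_*$ divided by $|G|$ to descend effective classes) identifies $\text{Nef}(Y),\overline{\text{Mov}(Y)},\text{Eff}(Y)$ with $\text{Nef}(X)^G,\overline{\text{Mov}(X)}^G,\text{Eff}(X)^G$, respectively. Every $\phi\in\text{Aut}(Y)$ lifts to some $\tilde\phi\in\text{Aut}(X)$ which must normalize $G$ (otherwise it would not descend), giving an injection $\text{Aut}(Y)\hookrightarrow N_{\text{Aut}(X)}(G)/G$ whose induced action on $N^1(Y)=N^1(X)^G$ is the restriction of the $N$-action. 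In addition $\text{Bir}(Y)=\text{Aut}(Y)$, because any birational selfmap of $Y$ lifts to one of $X$ and birational selfmaps of an abelian variety are biregular; so part~(ii) of Conjecture~\ref{conj} reduces to part~(i).

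Second, I would establish on $X$ the chain $\overline{\text{Mov}(X)}=\text{Nef}(X)=\text{Nef}(X)^e=\text{Nef}(X)^+$: every nef line bundle on an abelian variety is semi-ample (via Mumford's analysis of semi-positive Hermitian forms) and hence admits an effective base-point-free positive multiple, yielding $\text{Nef}(X)=\text{Nef}(X)^e=\overline{\text{Mov}(X)}$, while Prendergast-Smith's covering $\text{Aut}(X)\cdot\Pi_X=\text{Nef}(X)$ by a rational polyhedral $\Pi_X$ gives $\text{Nef}(X)=\text{Nef}(X)^+$. Intersecting each equality with $N^1(X)^G$ and invoking the identifications above descends this chain to $\overline{\text{Mov}(Y)}=\text{Nef}(Y)=\text{Nef}(Y)^e=\text{Nef}(Y)^+$, which together with the trivial containment $\overline{\text{Mov}(Y)}^e\subseteq\text{Nef}(Y)^e$ establishes all four cone equalities of the theorem.

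The crux, and the main anticipated obstacle, is constructing a rational polyhedral fundamental domain $\Pi_Y$ for $\text{Aut}(Y)$ on $\text{Nef}(Y)$ out of Prendergast-Smith's $\Pi_X$ on $\text{Nef}(X)$. The difficulty is that arbitrary $\phi\in\text{Aut}(X)$ need not preserve the subspace $N^1(X)^G$; only the normalizer $N=N_{\text{Aut}(X)}(G)$ does, so $\Pi_X$ does not restrict to $N^1(X)^G$ in a naive way. Writing $\rho\colon N^1(X)\to N^1(X)^G$ for the $G$-averaging projection, one checks $\rho(\text{Nef}(X))=\text{Nef}(X)^G$, whence
\[
\text{Nef}(Y)=\rho\bigl(\text{Aut}(X)\cdot\Pi_X\bigr)=\bigcup_{\phi\in\text{Aut}(X)}\rho(\phi^*\Pi_X),
\]
a countable union of rational polyhedral subcones of $N^1(X)^G$. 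The heart of the proof is to collapse this union, modulo the $N/G$-action, to finitely many pieces whose union is a rational polyhedral fundamental domain for $\text{Aut}(Y)$. I expect this to proceed via Looijenga's descent lemma (Lemma~\ref{thm1.1a}): after verifying that the image of $\text{Aut}(Y)$ in $\text{GL}(N^1(Y))$ is arithmetic (inherited from the arithmeticity of $\text{Aut}(X)$'s image on $N^1(X)$) and that $\text{Nef}(Y)$ inherits from $\text{Nef}(X)$ the required convex-geometric hypotheses, Looijenga's criterion should produce $\Pi_Y$ directly.
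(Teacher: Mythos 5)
Your first two paragraphs track the paper closely: the identifications $N^1(Y)\simeq N^1(X)^G$, $\textup{Nef}(Y)\simeq\textup{Nef}(X)^G$, $\textup{Aut}(Y)\simeq \textup{N}_{\textup{Aut}(X)}(G)/G$ are Proposition \ref{p1.7}, and the cone equalities $\ol{\textup{Mov}(Y)}^e=\textup{Nef}(Y)^e=\textup{Nef}(Y)^+=\textup{Nef}(Y)$ follow, as in Proposition \ref{p1.9} and Corollary \ref{conjGHV}, from Bauer's $\textup{Nef}=\textup{Eff}$ on abelian varieties together with the sandwich $\textup{Nef}^e\subseteq\ol{\textup{Mov}}^e\subseteq\textup{Eff}$. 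That part is fine.

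The gap is exactly where you locate the crux, and your proposed mechanism does not close it. First, Looijenga's lemma (Lemma \ref{thm1.1a}) cannot ``produce $\Pi_Y$ directly'': its first hypothesis is the existence of a rational polyhedral cone $\Pi$ with $\Gamma\cdot\Pi\supset C$, which is precisely the statement you are trying to prove; the lemma only upgrades such a covering to a fundamental domain. Your candidate covering $\bigcup_{\phi\in\textup{Aut}(X)}\rho(\phi^*\Pi_X)$ is a countable union of rational polyhedral cones with no argument that it collapses to finitely many $\textup{N}_{\textup{Aut}(X)}(G)/G$-orbits --- and there is no reason it should, since the translates $\phi^*\Pi_X$ for $\phi$ outside the normalizer have no compatibility with the subspace $N^1(X)^G$. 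Second, the arithmeticity of the group acting on $N^1(X)^G$ is not ``inherited'' from that of $\textup{Aut}(X)$ on $N^1(X)$ in any formal way: one must exhibit the acting group inside an algebraic group defined over $\bQ$ and compare with its integer points. The paper does this by showing $\textup{C}_{\textup{End}(X)^\times}(G)=\bigl(\textup{End}(X)^G\bigr)^\times$ is the group of $\bZ$-points of $\bigl(\textup{End}_{\bR}(X)^G\bigr)^\times$ via the rational representation (Lemma \ref{end-arithm}), and then that this group surjects onto $\textup{Aut}(\textup{Amp}(X)^G)^0$ (Lemma \ref{alphaG}). Third, and most importantly, the input that actually yields the covering is Ash's reduction theorem (Theorem \ref{thm1.1}), which requires $\textup{Amp}(X)^G$ to be a homogeneous self-dual cone; this is never addressed in your proposal and is the technical heart of the paper: one proves that $\textup{End}_{\bQ}(X)^G$ is a semisimple $\bQ$-algebra on which the Rosati involution restricts to a positive-definite involution, hence $\textup{End}_{\bR}(X)^G$ is a product of matrix algebras over $\bR,\bC,\bH$ (Theorem \ref{dec-Ginvarinatalgebra}), identifying $\textup{Amp}(X)^G$ with a product of cones of positive-definite Hermitian matrices (Theorem \ref{Ginvariantcone}). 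In short, the correct strategy is not to project Prendergast-Smith's fundamental domain, but to re-run his entire argument intrinsically on the $G$-invariant part of the endomorphism algebra; without that, the central step of your proof is missing.
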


A core idea underlying the proof of Theorem \ref{main1} lies in the ability to describe the nef cone of $Y$ in terms of the $G$-invariant nef cone of $X$ and to establish connections between the automorphisms of $Y$ and the one in the normalizer $\text{N}_{\text{Aut}(X)}(G)$. Specifically, assuming the conjecture holds true for a variety $X$, one may seek to establish the existence of a rational polyhedral fundamental domain $\Pi$ for the action of $\text{N}_{\text{Aut}(X)}(G)$ (or a subgroup $H$ thereof) on $\bigl(\text{Nef}(X)^G\bigr)^+$ and subsequently project $\Pi$ under $\pi_\ast$, yielding a rational polyhedral fundamental domain for the action of $\text{Aut}(X)$ on $\text{Nef}(Y)^+$. Actually, we demonstrate that it suffices to provide a rational polyhedral cone $\Pi\subset \bigl(\text{Nef}(X)^G\bigr)^+$ such that $\text{Amp}(X)^G\subset H\cdot \Pi$ for some $H\le\text{N}_{\text{Aut}(X)}(G)$ to obtain a rational polyhedral fundamental domain for the nef cone of the quotient $Y=X/G$, see Proposition \ref{conjquotient}.

For abelian varieties $X$, Prendergast-Smith \cite{Pr12} translates the conjecture \ref{conj} for the nef cone into a well-known problem of Convex Geometry concerning the existence of a rational polyhedral fundamental domain for the action of arithmetic groups on homogeneous self-dual cones (we refer to Section \ref{sec.convexgeo} and Theorem \ref{thm1.1} for further details). 
Our approach mirrors this connection with Convex Geometry. Specifically, we establish that the $G$-invariant $\bR$-algebra $\text{End}_{\bR}(X)^G$ assumes a certain matrix algebraic structure (Theo\-rem \ref{dec-Ginvarinatalgebra}). Moreover, by using that $\textup{C}_{\textup{Aut}(X)}(G)$ and $\text{Amp}(X)^G$ can be embedded in this algebra, we ascertain their properties to be an arithmetic group and a homogeneous self-dual cone, respectively. This connection with the theory of reduction enables us to invoke the main result of this theory (Theorem \ref{thm1.1}) which guarantees the existence of a rational polyhedral cone $\Pi\subset \bigl(\text{Nef}(X)^G\bigr)^+$ such that $\text{Amp}(X)^G\subset \textup{C}_{\textup{Aut}(X)}(G)\cdot \Pi$. Additionally, for GHVs $Y$ (as for abelian varieties) we have $\ol{\text{Mov}(Y)}^e=\textup{Nef}(Y)^e=\textup{Nef}(Y)^+=\text{Nef}(Y)$ (Proposition \ref{conjGHV}), thereby the validity of the Cone Conjecture for the nef cone implies the one for the movable cone.\\

This paper is structured as follows. In Section \ref{sec.GHV}, we lay the groundwork by introducing GHVs and revisiting their key properties. Sections \ref{sec.abelian} and \ref{sec.convexgeo} serve to recapitulate the essential findings in abelian varieties and Convex Geometry, including the Reduction Theory, which are crucial to our proof. Section \ref{sec.conjGHV} is dedicated to reformulating the cone conjecture for étale quotients, with a particular focus on GHVs. The heart of our article lies in Section \ref{sec.proof}, wherein we present the proof of our main result, Theorem \ref{main1}, as explained previously.\\

\noindent \textbf{Acknowledgements.}
The authors gratefully acknowledge Alessandra Sarti for introducing the problem and engaging in discussions throughout the writing process. Special thanks are extended to Gianluca Pacienza for suggesting the topic. Additionally, the authors express their gratitude to Alice Garbagnati and Carolina Araujo for their invalua\-ble feedback on the final version of the article. \\
The first author acknowledges Cécilie Gachet for insightful discussions and contributions during a winter school in Rennes and beyond. Gratitude is also extended to Enrica Floris for her helpful insights. She would like to thank the Departments of Mathematics of  Université de Poitiers and Universidad de la Frontera for the financial support during her stay in Chile, which facilitated productive discussions with esteemed mathematicians for the present paper. Acknowledgements are extended to the Departments of Mathematics of Università degli Studi di Milano for the fundings. The first author is supported by the “VINCI 2022-C2-62” grant issued by the Université Franco-Italianne. The second author expresses gratitude to Mikhail Belolipetsky for elucidating various perspectives on arithmetic groups, a crucial concept in this research. Furthermore, gratitude is expressed to CAPES (Coordenação de Aperfeiçoamento de Pessoal de Nível Superior) for the financial support received during her stay in Poitiers, which facilitated the collaboration that resulted in the successful completion of this paper.


\section{Generalized Hyperelliptic Varieties}\label{sec.GHV}
\noindent In this section we introduce our main objects and we recall the main proprieties.

\begin{definition}
A \emph{complex torus} is the quotient of $\bC^n$ by the action of translations of a lattice $\Lambda\simeq \bZ^{2n}$.
If the torus has a positive line bundle it is called \emph{abelian variety}.
\end{definition}

\begin{definition}
A \emph{Generalized Hyperelliptic Variety (GHV)} is a manifold not isomorphic to an abelian variety but which admits an abelian variety as étale Galois cover.
\end{definition}

\begin{remark}\label{defGHV}
    If $Y$ is a GHV, by definition, there exists an abelian variety $X$ and a group $G\le \text{Aut}(X)$ acting freely on $X$ such that $Y=X/G$. In particular, since $Y$ is not isomorphic to an abelian variety, $G$ does not contain only translations. In fact, one can assume that $G$ does not contain any translation. Indeed, if  $ \langle id \rangle \neq G_0 \unlhd G$ is the subgroup of translations then $Y$ is also the quotient of the abelian variety $X'=X/G_0$ by the free finite action of the group $G'=G/G_0$ which does not contain any translation.
\end{remark}


\begin{remark}\label{GHV2}
We observe that every projective compact manifold $Y$ with numerically trivial canonical bundle whose universal cover is $\bC^n$ is a GHV. Indeed, the fundamental group $\pi_1(Y)$ is isomorphic to a torsion-free crystallographic group $\Gamma \in \text{Iso}(\bC^n)$, \emph{i.e.} it is discrete under compact-open topology, $\bC^n/\Gamma$ is compact and $\Gamma$ acts freely. By the first Bierbebach theorem (\cite[Theorem 2.1]{Sz}), we know that $\Gamma$ is characterized by the following exact sequence: $$1\longrightarrow \Lambda \longrightarrow \Gamma \longrightarrow G \longrightarrow 1$$
where $\Lambda$ is a subgroup of translations (which is maximal abelian and normal of finite index) and $G$ is a finite group. Therefore, $Y=\bC^n/\Gamma$ can be also obtained as the quotient of the complex torus $T=\bC^n/\Lambda$ by the action of the finite group $G=\Gamma/\Lambda$ which acts freely and does not contain any translation (since $\Gamma$ acts freely and $\Lambda$ is the maximal abelian and normal of finite index). Moreover, since $Y$ is projective and $\pi: T\longrightarrow Y$ is finite, we get that $T$ is a projective complex torus and so $Y$ is a GHV.
\end{remark}

\begin{proposition}\label{p1.9}
    Let $Y=X/G$ be a GHV and $D$ be a Cartier divisor on it. We denote by $[D]_{num}$ the class of $D$ in $N^1(Y)$. Then $[D]_{num}$ in $\textup{Eff}(Y)$ if and only if $[D]_{num}$ in ${\textup{Nef}(Y)}$.
\end{proposition}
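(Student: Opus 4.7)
The plan is to deduce $\textup{Eff}(Y) = \textup{Nef}(Y)$ from the analogous equality $\textup{Eff}(X) = \textup{Nef}(X)$ on the abelian cover $\pi \colon X \to Y$, combined with a $G$-descent argument for divisors. The essential input on the abelian side is standard: for any abelian variety $X$, an effective divisor is nef because it can be replaced, without changing its numerical class, by a translate meeting a given curve properly; conversely, every nef class on $X$ is represented by an effective divisor, which one sees for instance by passing to the quotient by the null subtorus of its Hermitian form, where the induced class is ample, and then using that $\textup{Eff}(X)$ is a convex cone.

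For the inclusion $\textup{Eff}(Y) \subseteq \textup{Nef}(Y)$, I would pick an effective representative $E \equiv D$ on $Y$. The pullback $\pi^*E$ is effective on $X$, hence nef, so $\pi^*[D]_{num} \in \textup{Nef}(X)$. Nefness then descends to $Y$: for any curve $C \subseteq Y$ and any irreducible component $C' \subseteq \pi^{-1}(C)$, the projection formula gives $(\pi^*D) \cdot C' = \deg(\pi|_{C'}) \cdot (D \cdot C)$, whence $D \cdot C \geq 0$.

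For the opposite inclusion $\textup{Nef}(Y) \subseteq \textup{Eff}(Y)$, start from $[D]_{num} \in \textup{Nef}(Y)$, so $\pi^*[D]_{num} \in \textup{Nef}(X) = \textup{Eff}(X)$, and pick an effective $F \equiv \pi^*D$ on $X$. The $G$-symmetrization $F' := \sum_{g \in G} g^*F$ is $G$-invariant and effective, with numerical class $|G| \cdot \pi^*[D]_{num}$. Because $\pi$ is étale Galois with group $G$ acting freely, pullback identifies $\textup{Div}(Y)$ with $\textup{Div}(X)^G$, so $F' = \pi^* E$ for some effective divisor $E$ on $Y$; the injectivity of $\pi^*$ on $N^1$ then forces $[E]_{num} = |G|[D]_{num}$. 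Since $\textup{Eff}(Y)$ is a convex cone, $[D]_{num} = |G|^{-1}[E]_{num}$ lies in $\textup{Eff}(Y)$.

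The main conceptual input is the abelian variety case $\textup{Eff}(X) = \textup{Nef}(X)$; once this is granted, the descent to the GHV $Y$ is essentially formal, and the only delicate point I expect is verifying that $G$-invariant effective divisors on $X$ genuinely come from effective divisors on $Y$, which relies on the freeness of the $G$-action and the Galois nature of $\pi$.
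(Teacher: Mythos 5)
Your proof is correct and follows essentially the same route as the paper: reduce to the abelian case $\textup{Eff}(X)=\textup{Nef}(X)$ (which the paper cites from Bauer) and transfer both properties along the finite \'etale cover $\pi$. The paper states the transfer in one line, whereas you usefully spell out the only nontrivial direction (descent of effectivity) via $G$-symmetrization and convexity of $\textup{Eff}(Y)$ — one could equivalently just push forward an effective representative by $\pi_*$.
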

\begin{proof}
We recall that the statement is true for abelian varieties, \cite[Lemma 1.1]{Bau98}. Let $\pi: X \longrightarrow Y$ be the étale Galois cover.
The statement easily follows using that $D$ is nef/effective on $Y$ if and only if $\pi^\ast D$ is nef/effective on $X$.
\end{proof}

\begin{proposition}\label{p1.7}
Let $Y=X/G$ be a GHV manifold. Then:
\begin{romanenumerate}
\item It holds $\textup{Aut}(Y)\simeq \dfrac{\textup{N}_{\textup{Aut}(X)}(G)}{G}$
where $\textup{N}_{\textup{Aut}(X)}(G)$ is the normalizer of $G$ in $\textup{Aut}(X)$.
\item The pull back map $\pi^\ast$ defines the following isomorphisms: $${\textup{Amp}(Y)}\simeq {\textup{Amp}(X)}^G \qquad \textup{Nef}(Y)\simeq \textup{Nef}(X)^G.$$
\end{romanenumerate}

\end{proposition}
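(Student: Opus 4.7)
The plan is to handle the two parts separately. For part (i), the key input is the Bieberbach description of $Y$ from Remark \ref{GHV2}; for part (ii), it is finite étale descent combined with the projection formula.

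For part (i), I would set up the short exact sequence
\[
1 \longrightarrow G \longrightarrow \text{N}_{\text{Aut}(X)}(G) \stackrel{\rho}{\longrightarrow} \text{Aut}(Y) \longrightarrow 1.
\]
The descent map $\rho$ is defined by sending $\psi$ to the unique $\bar\psi \in \text{Aut}(Y)$ satisfying $\pi \circ \psi = \bar\psi \circ \pi$, which makes sense because $\psi$ normalizes $G$ and therefore permutes the $G$-orbits. To identify the kernel, observe that $\pi \circ \psi = \pi$ forces $\psi(x) = \gamma(x)\cdot x$ for a continuous choice of $\gamma(x)\in G$; discreteness of $G$ together with connectedness of $X$ then forces $\gamma$ to be constant, so $\psi \in G$. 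Surjectivity of $\rho$ is the delicate point: given $\varphi \in \text{Aut}(Y)$, I would lift it to an affine self-map $\tilde\varphi$ of the universal cover $\bC^n$; the induced automorphism of $\Gamma = \pi_1(Y)$ must preserve the translation lattice $\Lambda$, since by the First Bieberbach Theorem $\Lambda$ is the unique maximal normal abelian subgroup of finite index in $\Gamma$ and is therefore characteristic. Consequently $\tilde\varphi$ descends to an automorphism $\psi$ of $X = \bC^n/\Lambda$; a direct check using $\pi \circ \psi = \varphi \circ \pi$ shows $\psi G \psi^{-1} = G$ and $\rho(\psi) = \varphi$.

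For part (ii), I would first prove that $\pi^{\ast}: N^1(Y)_{\bR}\to N^1(X)^G_{\bR}$ is an isomorphism of real vector spaces. The projection formula yields $\pi_{\ast}\pi^{\ast} = |G|\cdot \text{id}$, which gives injectivity, while for any $G$-invariant class $\alpha$ the averaging identity $\pi^{\ast}\bigl(\tfrac{1}{|G|}\pi_{\ast}\alpha\bigr) = \alpha$ gives surjectivity. It then remains to identify the ample (respectively nef) cones on both sides. Pullback along a finite morphism preserves ampleness and nefness and lands in the $G$-invariant part, so $\pi^{\ast}\text{Amp}(Y) \subseteq \text{Amp}(X)^G$ and similarly for nef. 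For the reverse inclusions, the projection formula for intersections with curves transfers nefness from $\pi^{\ast} D$ back to $D$; for ampleness the same conclusion follows either from Nakai--Moishezon applied to subvarieties $Z \subseteq Y$ via their preimages, or from the classical fact that ampleness descends along a finite surjective morphism.

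The main obstacle lies in the surjectivity of $\rho$ in (i): for an arbitrary étale cover, automorphisms of the base need not lift to the total space, and the argument genuinely relies on the characteristic property of the translation lattice $\Lambda \subset \Gamma$ furnished by the First Bieberbach Theorem. Once this lifting step is available, the rest of (i) is formal and (ii) is a standard descent computation.
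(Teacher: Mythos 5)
Your proposal is correct and follows essentially the same route as the paper: a descent homomorphism $\text{N}_{\text{Aut}(X)}(G)\to\text{Aut}(Y)$ with kernel $G$ for part (i), and the pullback isomorphism $N^1(Y)\simeq N^1(X)^G$ together with ampleness descent along the finite morphism $\pi$ for part (ii). The only difference is that you supply self-contained arguments (the Bieberbach/characteristic-lattice lifting for surjectivity, the projection formula for the vector-space isomorphism and for nefness) where the paper instead cites \cite[Corollary 1.7]{M23} and \cite[Proposition 4.4]{Hart2} and deduces the nef statement by taking closures of the ample cones.
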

\begin{proof}
\begin{romanenumerate}
    \item The following homomorphism of groups is well-defined
    \begin{center}
        \begin{tikzcd}
            \Phi\colon N_{\text{Aut}(X)}(G) \arrow[r] & \text{Aut}(Y) \\
            \alpha_X \arrow[r,mapsto] & \alpha_Y
        \end{tikzcd}
    \end{center}
    where $\alpha_Y$ is such that \begin{tikzcd}
        X \arrow[r,"\alpha_X"] \arrow[d] & X \arrow[d] \\
        Y \arrow[r,"\alpha_Y"'] & Y
    \end{tikzcd} is commutative.
    By \cite[Corollary 1.7]{M23}, it follows that $\Phi$ is surjective and Ker$(\Phi)=G$. Hence, we conclude by the first theorem of isomorphism.

    \item  Let us consider the following homomorphism 
    $$ \begin{tikzcd}
           \pi^\ast \colon N^1(Y) \arrow[r] & N^1(X)^G \\
           D \arrow[r, mapsto] & \pi^\ast D
    \end{tikzcd}$$
which is an isomorphism of vector spaces.
Since $\pi$ is a finite morphism, by \cite[Proposition 4.4]{Hart2} the pull back $\pi^\ast D$ with $D\in N^1(Y)$ is ample if and only if $D$ is ample. Thus we have the first isomorphism. The second one follows since the nef cone is the closure of the ample cone.
\end{romanenumerate}
\end{proof}

\section{Preliminaries on Abelian varieties}\label{sec.abelian}
 \noindent In this section we recall the main proprieties of abelian varieties that will be useful in the subsequent sections, we refer to \cite{BL}. 



\subsection{Endomorphism algebra of abelian varieties}

\begin{definition}
An \emph{automorphism} of an abelian variety $X$ is a biholomorphic map.
An \emph{endomorphism} of an abelian variety $X$ is an automorphism that is compatible with the group structure.   
\end{definition}

We denote the group of automorphism of $X$ by $\textup{Aut}(X)$. We recall that any $\varphi$ in $\text{Aut}(X)$ is a composition of an endomorphism and a translation.\\
Let us denote by End$(X)$ the ring of endomorphisms of $X$ and by End$_{\bQ}(X)=\text{End}(X)\otimes\bQ$ its extension on $\bQ$, called \emph{$\bQ$-endomorphism algebra}. In this section, we recall some properties of the $\bQ$-endomorphism algebra, for further details see \cite[Chapter 5]{BL}. 



\begin{definition}
An isogeny $\varphi: X \rightarrow Y$ of abelian varieties is a surjective endomorphism with finite kernel. If such $\varphi$ exists then $X$ and $Y$ are said to be \emph{isogenous}.
\end{definition}

\begin{remark}
An isogeny $\varphi\in Hom(X,Y)$ is invertible only in $\text{Hom}_{\bQ}(X,Y)$, see \cite[Proposition 1.2.6]{BL}.
\end{remark}

\begin{definition}[Section 2.4, \cite{BL}]
The \emph{dual abelian variety} $\hat{X}$ of $X=\bC^n/\Lambda$ is the quotient of the $\bC$-antilinear form $\ol{\Omega}:=\textup{Hom}_{\ol{\bC}}(\bC^n,\bC)$ by the action of the dual lattice $\hat{\Lambda}=\{l\in \ol{\Omega} \colon \text{Im}l(\lambda) \in \bZ \text{ for every $\lambda \in \Lambda$}\}$.   
\end{definition}

 By \cite[Proposition 2.4.1]{BL}:  $\hat{X}\simeq \text{Pic}^0(X)=\text{ker}(c_1)$ where $c_1: \text{Pic}(X)\longrightarrow NS(X)$ is the first Chern class.
Given an homomorphism $\varphi\colon X\rightarrow X$ with analytic re\-presentation $\widetilde{\varphi}:\bC^n\longrightarrow \bC^n$, the (anti)-dual map $\widetilde{\varphi}^\vee:\ol{\Omega}\longrightarrow \ol{\Omega}$ induces a homomorphism $\hat{\varphi}: \hat{X} \longrightarrow \hat{X}$ called the \emph{dual map} of $\varphi$, see \cite[Section 2.4]{BL}. 

Let $D\in \textup{Pic}(X)$, for any point $x\in X$ the line bundle $t^\ast_x D \otimes D^{-1}$ has zero first Chern class, where $t_x$ is the translation by $x$. We get a group homomorphism, as follows, for any $D\in \text{Pic}(X)$:

\begin{equation}\label{mapondual}
\begin{tikzcd}
       \phi_{D} \colon X \arrow[r] & \hat{X}\simeq\text{Pic}^0(X) \\
       x \arrow[r,mapsto] & t^\ast_x D \otimes D^{-1}
\end{tikzcd}
\end{equation}

Let us denote by $L$ an ample line bundle on $X$. Since $L$ is ample, $\phi_{L} \colon X\longrightarrow \hat{X}$ is an isogeny and so the inverse $\phi_L^{-1} \colon \hat{X}\longrightarrow X$ is well-defined in $\text{Hom}_{\bQ}(\hat{X},X)$. This allows to define an involution on End$_{\bQ}(X)$ as follows:
\begin{equation}\label{rosati}
\begin{tikzcd}
' \colon \textup{End}_{\bQ}(X) \arrow[r] & \textup{End}_{\bQ}(X) \\
 \varphi\arrow[r, mapsto] & \varphi':=\phi_L^{-1} \hat{\varphi} \phi_L .
\end{tikzcd} \end{equation}

\noindent The involution above is called \emph{Rosati involution}. It is positive-definite with respect to the reduced trace Tr$_{\bQ}$ over $\bQ$, \emph{i.e.} $\forall \varphi\in \text{End}_{\bQ}(X)$ then Tr$_{\bQ}(\varphi\circ \varphi')>0$, see \cite[Theorem 5.1.8]{BL}. 

The following result, well-known to people working with abelian varieties, describes the $\bR$-algebra $\text{End}_{\bR}(X):=\text{End}_{\bQ}(X)\otimes \bR$ as a product of certain matrices algebras. The result follows combing the Poincarè reducibility Theorem \cite[Theorem 5.3.7]{BL} and the classification of $\bQ$-division algebras with a positive-definite involution due to Albert \cite[IV.21 Theorem 2, page 201]{Mum}.

\begin{theorem}[Corollary 3.5 \cite{Pr12}]\label{dec-endo-algebra}
Let $X$ be an abelian variety. Then, we have the following isomorphism of $\bR$-algebra:
$$ \bigl(\textup{End}_{\bR}(X), '\bigr) \xrightarrow{\quad \simeq \quad} \bigl(\prod\limits_i \textup{Mat}_{r_i}(\bR) \times \prod\limits_j \textup{Mat}_{s_i}(\bC) \times \prod\limits_k \textup{Mat}_{t_k}(\bH),\dagger \bigr) $$ 
where the Rosati involution $'$ is sent to the positive-definite involution $\dagger$ given by the conjugate transpose on each factor.
\end{theorem}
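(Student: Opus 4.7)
The plan is to combine the two inputs already cited: Poincaré reducibility to decompose $\text{End}_{\mathbb{Q}}(X)$ into a product of matrix algebras over division rings, and Albert's classification to pin down what those division rings become after tensoring with $\mathbb{R}$. I would proceed as follows.

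First, apply the Poincaré reducibility theorem to write $X$ as isogenous to a product $\prod_i X_i^{n_i}$ where the $X_i$ are pairwise non-isogenous simple abelian varieties. Since the endomorphism algebra is an isogeny invariant and $\text{Hom}_{\mathbb{Q}}(X_i, X_j) = 0$ for $i \neq j$ by Schur's lemma (simple abelian varieties are non-isogenous iff they admit no non-zero morphism up to isogeny), this yields
\begin{equation*}
\text{End}_{\mathbb{Q}}(X) \;\cong\; \prod_i \text{Mat}_{n_i}(F_i), \qquad F_i := \text{End}_{\mathbb{Q}}(X_i),
\end{equation*}
where each $F_i$ is a finite-dimensional $\mathbb{Q}$-division algebra. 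Next I would observe that choosing an ample line bundle on $X$ polarizing each factor compatibly allows us to assume the Rosati involution $'$ respects the product decomposition and restricts on the $i$-th factor to the conjugate transpose (with respect to the division algebra) of an involution on $F_i$ which remains positive-definite with respect to the reduced trace.

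Second, invoke Albert's classification of finite-dimensional $\mathbb{Q}$-division algebras equipped with a positive-definite involution: each $F_i$ is of one of four types (totally real field, totally indefinite quaternion algebra over a totally real field, CM field, or a certain definite quaternion algebra). Tensoring with $\mathbb{R}$, one checks case by case that $F_i \otimes_{\mathbb{Q}} \mathbb{R}$ splits as a finite product whose factors are $\mathbb{R}$, $\mathbb{C}$, or $\mathbb{H}$, and that the induced involution is complex/quaternionic conjugation on each factor. Consequently
\begin{equation*}
\text{Mat}_{n_i}(F_i) \otimes_{\mathbb{Q}} \mathbb{R} \;\cong\; \prod_\alpha \text{Mat}_{n_i}(K_{i,\alpha}),\qquad K_{i,\alpha}\in\{\mathbb{R},\mathbb{C},\mathbb{H}\},
\end{equation*}
with involution sent to the conjugate transpose $\dagger$ on each matrix factor. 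Relabelling indices according to whether $K_{i,\alpha}$ is $\mathbb{R}$, $\mathbb{C}$, or $\mathbb{H}$ and collecting sizes gives the claimed product of $\text{Mat}_{r_i}(\mathbb{R})$, $\text{Mat}_{s_j}(\mathbb{C})$, and $\text{Mat}_{t_k}(\mathbb{H})$.

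The main obstacle I anticipate is tracking the Rosati involution through these identifications and verifying it matches $\dagger$ on every factor. The positivity of Rosati with respect to $\operatorname{Tr}_{\mathbb{Q}}$ is the essential ingredient: positivity forces the involution on each $F_i \otimes \mathbb{R}$-factor to be precisely the standard conjugation (any other $\mathbb{R}$-linear involution on $\mathbb{R}$, $\mathbb{C}$, or $\mathbb{H}$ would violate positivity of the real trace form on some element), and this in turn upgrades to conjugate transpose at the level of matrix algebras. Everything else is essentially a bookkeeping exercise on the block decomposition coming from Poincaré reducibility.
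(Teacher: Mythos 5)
Your argument follows exactly the route the paper indicates for this result (which it cites from Prendergast--Smith without reproving it): Poincar\'e reducibility to reduce $\textup{End}_{\mathbb{Q}}(X)$ to a product of matrix algebras over division algebras, then Albert's classification together with positivity of the Rosati involution to identify the factors of the real tensor product as matrix algebras over $\mathbb{R}$, $\mathbb{C}$, $\mathbb{H}$ with the conjugate-transpose involution. The only imprecision is the claim that the matrix sizes are preserved after tensoring with $\mathbb{R}$ --- e.g.\ a totally indefinite quaternion algebra $F_i$ gives $\textup{Mat}_{n_i}(F_i)\otimes\mathbb{R}\cong\prod_\alpha \textup{Mat}_{2n_i}(\mathbb{R})$ --- but since the statement places no constraint on the sizes $r_i,s_j,t_k$, this is harmless bookkeeping.
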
 

\subsection{Nef cone of an abelian variety}
In this section, we recall the description of the nef cone inside the $\bR$-algebra $\text{End}_{\bR}(X)$.

\begin{theorem} [Section 5.2 \cite{BL}] \label{thm2.3}
Let $X$ be an abelian variety and $L$ be the ample line bundle defining the Rosati involution $'$ in \eqref{rosati}. There is an embedding 
\begin{center} 
\begin{tikzcd}
f \colon N^1(X) \arrow[r, hookrightarrow] & \textup{End}_{\bR}(X) \\
 D \arrow[r, mapsto] & \phi_L^{-1} \phi_D 
\end{tikzcd}
\end{center}
where $\phi_L$ and $\phi_D$ are defined as in \eqref{mapondual}.
In particular: $$N^1(X)\simeq \{ \varphi\in  \textup{End}_{\bR}(X) \colon  \varphi=\varphi'\}:=\textup{End}^s_{\bR}(X). $$ 
\end{theorem}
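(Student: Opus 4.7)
My plan is to reduce the statement to the classical identification $\mathrm{NS}(X)\simeq \mathrm{Hom}^s(X,\hat X)$, where $\mathrm{Hom}^s$ denotes the group of homomorphisms equal to their own dual under the canonical isomorphism $\hat{\hat X}\simeq X$. The map $D\mapsto \phi_D$ defined in \eqref{mapondual} factors through the first Chern class, so it only depends on the algebraic (equivalently, for abelian varieties, the numerical) class of $D$. Thus the assignment $f\colon [D]_{num}\mapsto \phi_L^{-1}\phi_D$ is well defined in $\mathrm{End}_{\bQ}(X)$, and extends $\bR$-linearly to $N^1(X)\to \mathrm{End}_{\bR}(X)$.

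Next I would show that the image of $f$ lies in $\mathrm{End}^s_{\bR}(X)$. Setting $\varphi=\phi_L^{-1}\phi_D$, one has
\begin{equation*}
\varphi'=\phi_L^{-1}\hat{\varphi}\,\phi_L=\phi_L^{-1}\bigl(\hat{\phi}_D\,\hat{\phi}_L^{-1}\bigr)\phi_L.
\end{equation*}
Since $\phi_L$ and $\phi_D$ are both induced by line bundles, they are symmetric, i.e. $\hat{\phi}_L=\phi_L$ and $\hat{\phi}_D=\phi_D$ modulo the canonical identification $\hat{\hat X}\simeq X$. Substituting this gives $\varphi'=\phi_L^{-1}\phi_D=\varphi$, so $f$ takes values in $\mathrm{End}^s_{\bR}(X)$. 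Injectivity follows because $\phi_L$ is an isogeny (hence invertible in $\mathrm{Hom}_{\bR}(\hat X,X)$) and the map $D\mapsto \phi_D$ is already injective on $N^1(X)$, as can be checked on the analytic/rational representation: the first Chern form of $D$ is recovered from the tangent map of $\phi_D$.

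For surjectivity onto $\mathrm{End}^s_{\bR}(X)$, I would reverse the construction. Given $\varphi\in \mathrm{End}_{\bR}(X)$ with $\varphi=\varphi'$, set $\psi:=\phi_L\varphi\in \mathrm{Hom}_{\bR}(X,\hat X)$. The relation $\phi_L\varphi=\hat{\varphi}\,\phi_L$ coming from $\varphi=\varphi'$, together with $\hat{\phi}_L=\phi_L$, implies $\hat{\psi}=\psi$, i.e. $\psi$ is a symmetric homomorphism $X\to \hat X$. The classical correspondence $\mathrm{NS}(X)_{\bR}\simeq \mathrm{Hom}^s(X,\hat X)_{\bR}$ (the real span of the map $D\mapsto \phi_D$, see \cite[\S 2.4--2.5]{BL}) then produces $D\in N^1(X)$ with $\phi_D=\psi$, so $\varphi=\phi_L^{-1}\phi_D=f(D)$.

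The only genuinely nontrivial ingredient is the identification $N^1(X)\simeq \mathrm{Hom}^s(X,\hat X)$ over $\bR$; everything else is a formal manipulation of the Rosati involution. I would therefore spend most of the effort pinning down this bijection (in particular verifying that numerical and algebraic equivalence agree on an abelian variety so that $N^1(X)=\mathrm{NS}(X)_{\bR}$), and then assemble the two symmetry computations above to conclude.
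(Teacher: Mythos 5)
Your proposal is correct and is essentially the standard argument from the source the paper cites for this statement ([BL, Section 5.2]); the paper itself gives no proof beyond that citation. The two symmetry computations ($\hat{\phi}_L=\phi_L$, $\hat{\phi}_D=\phi_D$ under $\hat{\hat X}\simeq X$) and the reduction to $N^1(X)\simeq \mathrm{Hom}^s(X,\hat X)_{\bR}$ are exactly how the identification $N^1(X)\simeq \textup{End}^s_{\bR}(X)$ is established there.
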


Following \cite{Pr12} we recall that the action by pullback of $\text{Aut}(X)$ on $N^1(X)$ can be extended as an action of $\text{End}_{\bR}(X)^\times$ on $\text{End}_{\bR}(X)$.

\begin{lemma}[Corollary 2.4.6. (d) \cite{BL}] \label{pullbackrosati}
Let $X$ be an abelian variety. If $\varphi\in \textup{End}(X)$ and $D\in \textup{Pic}(X)$, then
\begin{equation}\label{dual2}
    \phi_{\varphi^\ast D}=\hat{\varphi} \circ \phi_D \circ \varphi.
\end{equation}
\end{lemma}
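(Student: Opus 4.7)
The plan is to verify the identity pointwise. Given $x \in X$, I would first unpack the left-hand side via the defining formula of the homomorphism in \eqref{mapondual}:
\[
\phi_{\varphi^\ast D}(x) \;=\; t_x^\ast(\varphi^\ast D) \otimes (\varphi^\ast D)^{-1}.
\]

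The main computational input is the commutation of translations with the endomorphism $\varphi$. Because an endomorphism in the sense of the paper respects the group structure, for every $x, y \in X$ one has $\varphi(t_x(y)) = \varphi(x+y) = \varphi(x) + \varphi(y) = t_{\varphi(x)}(\varphi(y))$, i.e.\ $\varphi \circ t_x = t_{\varphi(x)} \circ \varphi$. Pulling back divisors reverses the order, giving $t_x^\ast \circ \varphi^\ast = \varphi^\ast \circ t_{\varphi(x)}^\ast$. Applying this to $D$ and using that $\varphi^\ast$ is a group homomorphism on $\mathrm{Pic}(X)$ yields
\[
\phi_{\varphi^\ast D}(x) \;=\; \varphi^\ast\bigl(t_{\varphi(x)}^\ast D\bigr) \otimes \varphi^\ast(D^{-1}) \;=\; \varphi^\ast\bigl(t_{\varphi(x)}^\ast D \otimes D^{-1}\bigr) \;=\; \varphi^\ast\bigl(\phi_D(\varphi(x))\bigr).
\]

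To conclude, I would invoke the identification $\hat{X} \simeq \mathrm{Pic}^0(X)$ recalled just before \eqref{mapondual}: since $\phi_D(\varphi(x)) \in \mathrm{Pic}^0(X)$ and, by definition of the dual homomorphism (see Section~2.4 of \cite{BL}), $\hat{\varphi}$ is precisely $\varphi^\ast$ restricted to $\mathrm{Pic}^0(X)$, we get $\varphi^\ast(\phi_D(\varphi(x))) = \hat{\varphi}(\phi_D(\varphi(x)))$. This is $(\hat{\varphi} \circ \phi_D \circ \varphi)(x)$, which gives the claimed identity.

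There is no real obstacle here: the only subtlety is the identification of $\hat{\varphi}$ with the restriction of $\varphi^\ast$ to $\mathrm{Pic}^0(X)$, which is built into the definition of the dual morphism, and one must make sure to use that an endomorphism (not merely a biholomorphic automorphism) is used, so that the translation-commutation step goes through cleanly.
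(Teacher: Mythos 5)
Your proof is correct; the paper itself gives no argument for this lemma, simply citing \cite[Corollary 2.4.6]{BL}, and your pointwise computation (using $\varphi\circ t_x=t_{\varphi(x)}\circ\varphi$ for a group-compatible map and the identification of $\hat{\varphi}$ with $\varphi^\ast$ restricted to $\mathrm{Pic}^0(X)$) is exactly the standard argument found there.
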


\begin{theorem}[Section $4$ \cite{Pr12}] \label{alpha}
Let $X$ be an abelian variety. Then, the group $\textup{End}_{\bR}(X)^\times$ acts on $\textup{End}_{\bR}(X)$ as follows:
\begin{equation}\label{eq2.2}
\begin{tikzcd}
     \alpha \colon (\textup{End}_{\bQ}(X)\otimes \bR)^\times \arrow[r] &  \textup{GL}(\textup{End}_{\bQ}(X)\otimes \bR) \\
     \varphi  \arrow[r,mapsto] &  \alpha(\varphi)\colon l  \mapsto \varphi'\circ l \circ \varphi
\end{tikzcd}    
\end{equation}
where $'$ is defined in \eqref{rosati} and this extends the action of $\textup{Aut}(X)$ on $N^1(X)$ by pullbacks.
\end{theorem}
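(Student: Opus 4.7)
The plan is to verify two things: (a) $\alpha$ is well-defined and lands in $\textup{GL}(\textup{End}_{\bR}(X))$ so it genuinely defines an action, and (b) via the embedding $f$ of Theorem \ref{thm2.3}, this action restricts on $\textup{Aut}(X)\cdot N^1(X)$ to the standard pullback action.

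For (a), I would first note that for any fixed $\varphi\in \textup{End}_{\bR}(X)^\times$, the map $l \mapsto \varphi'\circ l\circ \varphi$ is manifestly $\bR$-linear in $l$. Since the Rosati involution is an anti-involution, we have $(\varphi')^{-1}=(\varphi^{-1})'$, so the explicit inverse of $\alpha(\varphi)$ is $l\mapsto (\varphi^{-1})'\circ l\circ \varphi^{-1}$, placing $\alpha(\varphi)$ in $\textup{GL}(\textup{End}_{\bR}(X))$. The compatibility with composition is a one-line computation using again the anti-multiplicativity of $'$:
\begin{equation*}
\alpha(\varphi\psi)(l)=(\varphi\psi)'\circ l\circ(\varphi\psi)=\psi'\circ\varphi'\circ l\circ\varphi\circ\psi=\alpha(\psi)\bigl(\alpha(\varphi)(l)\bigr),
\end{equation*}
so $\alpha$ is a group anti-homomorphism (equivalently a homomorphism on the opposite group), which is enough to have a well-defined action.

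For (b), I would reduce to the endomorphism case: an arbitrary $\varphi\in\textup{Aut}(X)$ factors as $t_x\circ \psi$ with $t_x$ a translation and $\psi$ an endomorphism, and translations act trivially on $N^1(X)$ since $t_x^\ast D\equiv D$ numerically. For $\psi\in\textup{End}(X)$ and $D$ a divisor, Theorem \ref{thm2.3} gives $f(D)=\phi_L^{-1}\phi_D$, and Lemma \ref{pullbackrosati} gives $\phi_{\psi^\ast D}=\hat\psi\circ\phi_D\circ\psi$. Inserting $\phi_L\phi_L^{-1}=\textup{id}$ and using the definition $\psi'=\phi_L^{-1}\hat\psi\phi_L$ of the Rosati involution yields
\begin{equation*}
f(\psi^\ast D)=\phi_L^{-1}\hat\psi\,\phi_D\,\psi=\bigl(\phi_L^{-1}\hat\psi\phi_L\bigr)\bigl(\phi_L^{-1}\phi_D\bigr)\,\psi=\psi'\circ f(D)\circ\psi=\alpha(\psi)\bigl(f(D)\bigr),
\end{equation*}
which is exactly the content of the extension statement.

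The proof is not hard; the only subtlety is bookkeeping the ordering of compositions, since $'$ reverses the order of multiplication, and remembering that $\alpha(\varphi)$ should be viewed as a right action (or, equivalently, a homomorphism into the opposite group). The actual geometric input is concentrated in Lemma \ref{pullbackrosati} and the definition of $'$ via $\phi_L$; everything else is formal manipulation in the $\bR$-algebra $\textup{End}_{\bR}(X)$.
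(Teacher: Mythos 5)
Your proof is correct and follows essentially the same route as the paper: the core of the argument is the identical computation $f(\psi^\ast D)=\phi_L^{-1}\hat\psi\,\phi_D\,\psi=\psi'\circ f(D)\circ\psi$ obtained by inserting $\phi_L\phi_L^{-1}$ and invoking Lemma \ref{pullbackrosati} together with the definition of the Rosati involution. You additionally spell out the two points the paper leaves implicit --- that $\alpha$ is an anti-homomorphism (hence a right action, matching the contravariance of pullback) and that translations act trivially on $N^1(X)$ (which the paper defers to Lemma \ref{end=aut}) --- both of which are accurate.
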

\begin{proof}
It is easy to prove that $\alpha$ defines an action. We prove that it preserves $N^1(X)$ and it acts on it by pullbacks, \emph{i.e.} for every $\varphi\in \text{End}_{\bR}(X)^\times$ the following diagram is commutative:
\begin{center}
    \begin{tikzcd}
        N^1(X) \arrow[r, hookrightarrow, "f"] \arrow[d, "\varphi^\ast" ' ]& \text{End}_{\bR}(X) \arrow[d,"\alpha(\varphi)"] \\
        N^1(X) \arrow[r, hookrightarrow, "f" '] & \text{End}_{\bR}(X)
    \end{tikzcd}
\end{center}
where $f$ is defined in Theorem \ref{thm2.3}.
For every $D\in N^1(X)$ we have:
$$ \alpha(\varphi)(f(D))=\varphi' f(D) \varphi= \varphi' \phi^{-1}_L \phi_D \varphi \overset{\eqref{rosati}}{=} \phi^{-1}_L \hat{\varphi}\phi_{L}\phi^{-1}_L \phi_D \varphi \overset{\eqref{dual2}}{=}\phi^{-1}_L \phi_{\varphi^\ast D}=f(\varphi^\ast D). $$
\end{proof}

\section{Reduction theory in Convex Geometry}\label{sec.convexgeo}
\noindent In this section we recall the main result of reduction theory for arithmetic groups acting on homogeneous self-dual cones.\\

\noindent Let $V$ be a finite-dimensional $\bR$-vector space. 

\begin{definition}
We say that $V$ has a $k$-structure for a subfield $k\subset \bR$  if it is obtained by extension of scalars from a vector space $V_k$ over $k$.
\end{definition}

\noindent A $C \subset V \setminus\{0\}$ is a \emph{cone} if it is a non-degenerate convex cone. 

\begin{definition}\label{def.hull}
The \emph{convex hull} of $C$ is the cone of convex combination of points in $C$, \emph{i.e.} convhull$(C)=\{ \sum a_i c_i \quad | \quad a_i\in \bR_{\geq 0}, \sum\limits_i a_i=1 \text{ and } c_i\in C\}$. \\
The \emph{rational hull of $C$}, denoted by $C^+$, is the convex hull of the rational points in $\ol{C}$, \emph{i.e.} $C^+=\text{ convhull }(\ol{C}\cap V_{\bQ})$.   
\end{definition}

\noindent We denote the group of transformations of $C$ by $\text{Aut}(C)=\{ \varphi \in \text{GL}(V) \colon \varphi(C)=C\}$.

\begin{definition} A cone $C$ is said to be \emph{homogeneous} if $\text{Aut}(C)$ acts transitively on it, \emph{i.e.} for every $x,y \in C$ there exists $\varphi \in \text{Aut}(C)$ such that $\varphi(x)=y$.
\end{definition}

\noindent Let $\ol{C^\vee} \subset V^\vee$ be the set of linear forms in $V^\vee$ that are non-negative on $C$. The \emph{dual cone} $C^\vee$ is the interior of $\ol{C^\vee} \setminus\{0\}$. 

\begin{definition} 
A cone C is said to be \emph{self-dual} if there exists a positive-definite form on $V$ such that the resulting isomorphism between $V$ and $V^\vee$ transforms $C$ into $C^\vee$.
\end{definition}

\noindent Due to a result of Vinberg we known that homogeneous self-dual convex cones can be completely classified into a small number of cases, see \cite{Vin63}.

\begin{definition}
Let $C_i\subset V_i$ cone in the vector space $V_i$ for $i=1,2$. We define \emph{the direct sum $C_1$ and $C_2$} in vector spaces $V_1\oplus V_2$ to be the cone $C_1 \oplus C_2 :=\{v_1 +v_2 \in V_1\oplus V_2|v_i \in C_i\}$ and call a cone \emph{indecomposable} if it cannot be written as the direct sum of two nontrivial cones.
\end{definition}

\begin{theorem} [Remark 1.11 \cite{AMRT}] \label{thm1.2}
Any convex cone $C\subset V$ can be written as a direct sum $\oplus_i C_i$ of indecomposable cones. The product $\prod\limits_i\textup{Aut}(C_i)$ is a finite-index subgroup of $\textup{Aut}(C)$. The cones $C_i$ are homogeneous and self-dual if and only if $C$ is too. Any indecomposable homogeneous self-dual cone is isomorphic to one of the following:
\begin{enumerate}
\item[1.] the cone $\cP_r(\bR)$ of positive-definite matrices in the space $\cH_r(\bR)$ of $r\times r$ real symmetric matrices;
\item[2.] the cone $\cP_r(\bC)$ of positive-definite matrices in the space $\cH_r(\bC)$ of $r\times r$ complex symmetric matrices; 
\item[3.] the cone $\cP_r(\bH)$ of positive-definite matrices in the space $\cH_r(\bH)$ of $r\times r$ quaternionic symmetric matrices;
\item[4.] the spherical cone $\{(x_0, \dots, x_n) \in \bR^{n+1} \quad | \quad  x_0 > \sqrt{x^2_1 + \dots x^2_n}\}$ ;
\item[5.] the $27$-dimensional cone of positive-definite $3 \times 3$ octonionic Hermitian matrices.
\end{enumerate}
The inner product for which the cone is self-dual is $\langle x, y\rangle = Tr(xy^\ast)$ in all cases except $4$, and the usual inner product on $\bR^{n+1}$ in case $4$.
\end{theorem}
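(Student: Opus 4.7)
The plan is to combine a direct-sum decomposition argument for convex cones with the Koecher-Vinberg classification of self-dual homogeneous cones via formally real Jordan algebras, together with the Jordan-von Neumann-Wigner classification of simple Euclidean Jordan algebras.

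First, for the decomposition part, I would proceed as follows. A non-degenerate convex cone $C \subset V$ is called decomposable if one can split $V = V_1 \oplus V_2$ with $C = C_1 \oplus C_2$ where $C_i \subset V_i$. Iterating a maximal decomposition (the process terminates because $\dim V < \infty$) yields indecomposable summands, and uniqueness up to reordering follows from a standard extreme-ray argument: the extreme rays of $\oplus_i C_i$ split as the disjoint union of the extreme rays of the $C_i$, so the indecomposable factors are intrinsic. The inclusion $\prod_i \text{Aut}(C_i) \hookrightarrow \text{Aut}(C)$ has finite-index image because any element of $\text{Aut}(C)$ must permute the indecomposable factors in each isomorphism class, giving a homomorphism into a finite symmetric group whose kernel is exactly $\prod_i \text{Aut}(C_i)$. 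That $C$ is homogeneous (resp. self-dual) if and only if each $C_i$ is follows because transitivity and duality distribute over direct sums, using the orthogonal product inner product on $V = \oplus_i V_i$.

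The heart of the theorem is the classification of indecomposable homogeneous self-dual cones. Here I would invoke the Koecher-Vinberg theorem, which establishes a bijective correspondence between homogeneous self-dual cones $C \subset V$ and Euclidean (formally real) Jordan algebra structures on $V$, under which $C$ is recovered as the interior of the cone of squares $\{x^2 : x \in J\}$. The indecomposability of $C$ corresponds precisely to the simplicity of $J$. One then invokes the Jordan-von Neumann-Wigner classification of simple formally real Jordan algebras, which produces exactly five families: $\textup{Sym}_r(\bR)$, $\textup{Herm}_r(\bC)$, $\textup{Herm}_r(\bH)$, the spin factor $\bR \oplus \bR^n$, and the exceptional Albert algebra $\textup{Herm}_3(\bO)$. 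Translating back, the cones of squares are respectively $\cP_r(\bR)$, $\cP_r(\bC)$, $\cP_r(\bH)$, the Lorentz (spherical) cone, and the $27$-dimensional octonionic cone, matching cases (1)-(5). In each case, the Jordan trace form $\langle x,y\rangle = \text{Tr}(xy^*)$ (resp. the Minkowski form in case (4)) is the canonical inner product realizing the self-duality.

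The main obstacle is establishing the Koecher-Vinberg correspondence itself, which is the deepest input: given a homogeneous self-dual $C$, one fixes a base point $e \in C$ by homogeneity, uses the self-duality inner product together with the isotropy group of $e$ in $\text{Aut}(C)$ to define a commutative bilinear product on $V$, and verifies the Jordan identity using positivity of the inner product. Once this correspondence is available, the classification becomes a purely algebraic matter of analyzing systems of orthogonal primitive idempotents in simple Euclidean Jordan algebras, essentially by the Peirce decomposition. Since the full argument spans a book's worth of material (carried out in \cite{AMRT} and the references therein), my proof would cite the Koecher-Vinberg and Jordan-von Neumann-Wigner theorems and only verify the decomposition, finite-index, and preservation statements by the direct-sum arguments sketched above.
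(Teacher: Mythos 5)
The paper offers no proof of this statement: it is quoted verbatim as Remark 1.11 of \cite{AMRT} together with Vinberg's classification, and is used as a black box. Your outline --- direct-sum decomposition plus the Koecher--Vinberg correspondence with Euclidean Jordan algebras and the Jordan--von Neumann--Wigner classification of the simple ones --- is precisely the standard argument underlying those references, so it is correct and matches the route the paper implicitly relies on; the only point I would tighten is the uniqueness of the indecomposable decomposition, where the extreme-ray argument needs the cone to be pointed and a short connectedness argument to see that the grouping of extreme rays into factors is intrinsic.
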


Moreover, Vinberg \cite{Vin65} computed the automorphism groups of all the cones in the list of Theorem \ref{thm1.2}. In particular, we have the following result.

\begin{theorem}[\cite{Vin65}] \label{groupofcone}
Let $C$ be one of the cones $\cP_r(\bF)$ in the previous theorem where $\bF = \bR, \bC, \text{ or } \bH$. The identity component $\textup{Aut}(C)^0$ of the automorphism group of $C$ consists of all $\bR$-linear transformations of $\cH_r(\bF)$ of the form $D \xmapsto{\qquad}  M^\dag DM$ for some $M \in GL(r,\bF)$ where $M^\dag$ is the conjugate transpose.
\end{theorem}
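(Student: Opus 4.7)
The plan is to establish the theorem in two steps: first show that the maps $\rho(M)\colon D \mapsto M^\dagger D M$ lie in $\textup{Aut}(C)^0$, then prove surjectivity of the resulting homomorphism $\rho\colon GL(r,\bF) \to \textup{Aut}(C)^0$ by a transitivity plus stabilizer argument at the base point $I \in C$. For the first part, $\rho$ is a continuous homomorphism into $GL_{\bR}(\cH_r(\bF))$ whose image lies in $\textup{Aut}(C)$: each $\rho(M)$ is $\bR$-linear, preserves Hermiticity since $(M^\dagger D M)^\dagger = M^\dagger D M$, and preserves positive-definiteness since $v^\dagger M^\dagger D M v = (Mv)^\dagger D(Mv) > 0$ for every nonzero $v$. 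Connectedness of $GL(r,\bC)$, $GL(r,\bH)$, and $GL(r,\bR)^+$ (with $-I_r$ in the kernel of $\rho$ in the real case) places the image inside $\textup{Aut}(C)^0$.

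For the reverse inclusion, the starting observation is that $\rho(GL(r,\bF))$ already acts transitively on $C$: given $D \in \cP_r(\bF)$, its Hermitian positive-definite square root $D^{1/2}$ satisfies $\rho(D^{1/2})(I) = D^{1/2} I D^{1/2} = D$. Consequently $\textup{Aut}(C)^0$ also acts transitively on $C$, and proving $\textup{Aut}(C)^0 = \rho(GL(r,\bF))$ reduces to the equality of stabilizers of $I$. In $\rho(GL(r,\bF))$ this stabilizer is $\rho(U(r,\bF))$, where $U(r,\bF) = \{M \in GL(r,\bF) : M^\dagger M = I\}$ is the \emph{unitary group} of $\bF$.

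The main obstacle is to identify the full stabilizer $K$ of $I$ in $\textup{Aut}(C)^0$ with $\rho(U(r,\bF))$. The strategy is to exploit the self-duality of $C$: with respect to the inner product $\langle x,y\rangle = \textup{Tr}(xy)$ on $\cH_r(\bF)$, the cone coincides with its dual, so $\textup{Aut}(C)$ is stable under the adjoint $g \mapsto g^*$. This forces $K$ to lie in a maximal compact subgroup of $\textup{Aut}(C)^0$, to preserve $\langle\cdot,\cdot\rangle$, and consequently to act by linear automorphisms of the Euclidean Jordan algebra $(\cH_r(\bF), \tfrac{1}{2}(xy+yx))$ fixing the unit $I$. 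By the Koecher--Vinberg correspondence between homogeneous self-dual cones and Euclidean Jordan algebras (or by explicit case-by-case computation for $\bF = \bR, \bC, \bH$), such automorphisms are precisely the conjugations $D \mapsto M^\dagger D M$ with $M \in U(r,\bF)$, completing the argument.
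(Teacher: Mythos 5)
The paper gives no proof of this theorem; it is quoted from Vinberg, so your argument stands on its own. Your architecture is the standard one: show the maps $\rho(M)\colon D\mapsto M^{\dagger}DM$ land in $\textup{Aut}(C)^{0}$, get transitivity from Hermitian square roots, and pin down the stabilizer of $I$ via self-duality and the Jordan-algebra (Koecher--Vinberg) description. Most of the individual steps are sound, and for $\bF=\bC,\bH$ the connectedness of $GL(r,\bF)$ does what you want.

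The genuine gap is the case $\bF=\bR$ with $r$ even, and it breaks both halves of your argument. You dispose of the disconnectedness of $GL(r,\bR)$ by noting $-I_r\in\ker\rho$; but for $r$ even $\det(-I_r)=1$, so $-I_r$ lies in $GL(r,\bR)^{+}$ and does not carry you into the negative-determinant component. In fact the containment you want is false there: for $M_0=\textup{diag}(-1,1,\dots,1)$ the map $\rho(M_0)$ acts diagonally on the standard basis of $\cH_r(\bR)$ with $r-1$ eigenvalues equal to $-1$, so $\det\rho(M_0)=(-1)^{r-1}=-1$ when $r$ is even, and an operator of negative determinant cannot lie in the identity component of any subgroup of $\textup{GL}(\cH_r(\bR))$. (Concretely, for $r=2$ the cone $\cP_2(\bR)$ is the Lorentz cone in $\bR^{3}$ and $\rho(M_0)$ is a spatial reflection outside $SO^{+}(1,2)$.) The same parity issue resurfaces in your stabilizer step: the Jordan-automorphism argument identifies the stabilizer of $I$ in $\textup{Aut}(C)$, not in $\textup{Aut}(C)^{0}$, and for even $r$ only $\rho(SO(r))\subsetneq\rho(O(r))$ lies in the identity component; likewise for $\bF=\bC$ the Jordan automorphisms fixing $I$ also include the transpose $D\mapsto D^{T}$, which is not of the form $M^{\dagger}DM$, so ``precisely the conjugations'' needs the identity-component restriction you never actually enforce. (Separately, the claim that self-duality ``forces'' the stabilizer to preserve $\langle\cdot,\cdot\rangle$ is the one nontrivial analytic point -- normally proved via the characteristic function of the cone -- and is asserted rather than argued.) None of this affects the paper, which only uses the statement up to finite index, but as written your proof cannot establish the theorem in the literal form given: for $\bF=\bR$ and $r$ even the correct identity component is $\{D\mapsto M^{\dagger}DM \colon \det M>0\}$, and your argument needs to either restrict to that case or flag the discrepancy.
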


\begin{definition}
An \emph{algebraic group $\cG$ over a field $k$} is an algebraic variety over $k$ endowed with a group structure such that the following homomorphisms:
\begin{center}
    \begin{tikzcd}
        \mu\colon G\times G \arrow[r] & G & i\colon G\arrow[r] & G \\
        (g,h)\arrow[r,mapsto] & gh & g\arrow[r,mapsto] & g^{-1}
    \end{tikzcd}
\end{center}
are morphisms of varieties.\\
An algebraic group $\cG$ is said to be \emph{defined over a subfield $K\subset k$} if the polynomial equations defining it have coefficients in $K$. We denote the underlying structure of $K$-variety of $\cG$ by $\cG(K)$.\\
An algebraic group $\cG$ is said to be a \emph{linear algebraic group} if it admits a closed (with respect to the Zarisky topology) embedding $\rho: \cG \hookrightarrow \textup{GL}(n,k)$ for some $n\in \bN$, \emph{i.e.} $\rho(\cG):=G(k)$ is a subgroup defined by polynomial equations with coefficient in $k$. 
\end{definition}

\noindent Another basic theorem about the automorphism group of a homogeneous self-dual cone is due to Vinberg \cite{Vin65}.

\begin{theorem}[\cite{Vin65}] \label{reductive}
Let $C \subset V$ be a self-dual convex cone. Then the automorphism group $\textup{Aut}(C)$ is the group of real points of a reductive algebraic group.
\end{theorem}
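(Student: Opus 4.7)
The plan is to exhibit $\textup{Aut}(C)$ as a closed subgroup of $\textup{GL}(V)$ that is stable under a natural adjoint involution, and then to invoke the classical theorem of Mostow stating that any such subgroup is the group of real points of a reductive linear algebraic group defined over $\mathbb{R}$.

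First, I would use self-duality to fix a positive-definite inner product $\langle\cdot,\cdot\rangle$ on $V$ for which the induced isomorphism $V \xrightarrow{\sim} V^\vee$ carries $C$ onto $C^\vee$. With this identification in place, for every $\varphi \in \textup{Aut}(C)$ the dual map $\varphi^\vee \in \textup{GL}(V^\vee)$ preserves $C^\vee$ by definition of the dual cone, and after transport through the inner product it becomes the adjoint $\varphi^\ast$ with respect to $\langle\cdot,\cdot\rangle$. Since $C^\vee$ is sent to $C$, the adjoint $\varphi^\ast$ again lies in $\textup{Aut}(C)$, so $\textup{Aut}(C)$ is stable under the involution $\varphi \mapsto \varphi^\ast$.

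Next, I would verify that $\textup{Aut}(C)$ is closed in $\textup{GL}(V)$ in the Euclidean topology: if a sequence $\varphi_n \in \textup{Aut}(C)$ converges to $\varphi \in \textup{GL}(V)$, then $\varphi(\overline{C}) \subseteq \overline{C}$ and $\varphi^{-1}(\overline{C}) \subseteq \overline{C}$ by taking limits, and since $C = \textup{int}(\overline{C})$ is open and $\varphi$ is a homeomorphism, applying $\varphi$ and $\varphi^{-1}$ forces $\varphi(C) = C$. Thus $\textup{Aut}(C)$ is a closed, self-adjoint subgroup of $\textup{GL}(V)$.

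Finally, I would invoke Mostow's theorem on self-adjoint Lie subgroups of $\textup{GL}(V)$: every closed subgroup invariant under the transpose coming from some inner product is the group of real points of a reductive linear algebraic group defined over $\mathbb{R}$. Reductivity is the conceptually transparent half, because the involution $\varphi \mapsto (\varphi^\ast)^{-1}$ serves as a Cartan involution and gives a polar decomposition $\textup{Aut}(C) = K \cdot P$, where $K$ is the compact stabilizer of the inner product and $P$ is the intersection with the positive-definite symmetric matrices; no nontrivial unipotent radical can survive this decomposition. The main obstacle I anticipate is the algebraicity statement, which is the substantive input from Mostow's theorem; it is proved by showing that the Lie algebra $\mathfrak{g}$ of $\textup{Aut}(C)$, being stable under the Cartan involution, splits as a sum of its $\pm 1$ eigenspaces and that each generates a closed algebraic subgroup via the exponential/polynomial combinatorics of the polar decomposition. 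Since this is a standard result (see Onishchik--Vinberg or Mostow), I would simply cite it to conclude.
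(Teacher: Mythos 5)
The paper offers no proof of this statement---it is quoted directly from Vinberg---so the only question is whether your argument stands on its own. Your first two steps are correct and are indeed the standard opening moves: self-duality makes $\textup{Aut}(C)$ stable under the adjoint $\varphi\mapsto\varphi^{\ast}$ of the inner product realizing $C\simeq C^{\vee}$, and the limit argument (using $C=\mathrm{int}(\overline{C})$) shows $\textup{Aut}(C)$ is closed in $\textup{GL}(V)$.

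The gap is in the final step. The theorem you attribute to Mostow---that \emph{every} closed subgroup of $\textup{GL}(V)$ stable under a transpose is the group of real points of a reductive algebraic group---is false as stated. For irrational $\lambda$, the one-parameter group $\{\mathrm{diag}(e^{t},e^{\lambda t}) : t\in\mathbb{R}\}\subset\textup{GL}(2,\mathbb{R})$ is closed, connected and self-adjoint, yet it is Zariski-dense in a two-dimensional torus and is not the set of real points of any algebraic group. What Mostow's theorem actually delivers for a closed self-adjoint group with finitely many components is the polar decomposition $G=K\exp(\mathfrak{p})$, hence reductivity of the Lie algebra; the \emph{algebraicity} is a separate assertion that closedness plus self-adjointness simply do not imply. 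To close the gap one must use the cone again: for instance, show that the Zariski closure $\mathcal{G}$ of $\textup{Aut}(C)$ is still self-adjoint (hence reductive) and that $\textup{Aut}(C)$ has finite index in $\mathcal{G}(\mathbb{R})$. Even this requires care---for the positive orthant in $\mathbb{R}^{2}$, $\textup{Aut}(C)$ is a \emph{proper} finite-index subgroup of the real points of its Zariski closure (which contains $-\mathrm{id}$), so the literal statement of the theorem is already delicate. This algebraicity argument is the substantive content of Vinberg's proof, which the paper treats as a black box and which your proposal does not supply.
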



\begin{definition}[Section 7.C, \cite{Bor19}]\label{defari}
Let $\cG$ be an algebraic linear group in $\text{GL}(n,\bC)$ for some $n$ defined over $\bQ$. We define $\cG({\bZ}):=\cG\cap \text{GL}(n,\bZ)$. A subgroup $\Gamma \subset \cG({\bQ})$ is said to be \emph{arithmetic} if it is commensurable with $\cG(\bZ)$, \emph{i.e.} $\cG(\bZ)\cap \Gamma$ is of finite index in both $\cG(\bZ)$ and $\Gamma$.
\end{definition}

\begin{remark}
It is proved in \cite[Section 7.C]{Bor19} that the property of being arithmetic it is invariant under $\bQ$-isomorphisms.
\end{remark}

\begin{definition}
Let $V$ be a finite-dimensional vector space over $\bR$ and $C$ be a cone. Let $\Gamma\le \textup{GL}(V)$ be a group such that preserves the cone $C$, then a \emph{fundamental domain} for the action of $\Gamma$ on $C$ is a subset $\Pi\subset C$ such that
\begin{enumerate}
    \item [a.] $\Gamma\cdot \Pi=C$, \emph{i.e} $\bigcup\limits_{\gamma\in \Gamma} \gamma(\Pi)=C$;
    \item [b.]  $\gamma(\Pi) \cap \Pi \neq \emptyset$ has non-empty interior if and only if $\gamma\in
ker(\Gamma\longrightarrow \textup{GL}(V))$.
\end{enumerate}
\end{definition}

The basic problem in reduction theory, which dates back to Minkowski, is the following: given a homogeneous self-dual cone $C$ and an arithmetic group $\Gamma \le \mathcalboondox{Aut}(C)$ there exists a fundamental domain for the action of $\Gamma$ on $C$?
Borel has produced a theory of coarse fundamental domains (called Siegel sets) for arithmetic subgroups $\Gamma$. This theory has provided a tool to Ash to show the existence of a fundamental domain for actions of arithmetic groups on homogeneous self-dual cones. 

\begin{definition}
    A cone $C\subseteq V$, with $\text{dim } V = n$, is said to be \emph{polyhedral} if it is finitely-generated, \emph{i.e.} there is a set of vectors $\{v_1,\dots,v_k\}\in V$ such that $C=\{a_{1}v_{1}+\cdots +a_{k}v_{k}\mid a_{i}\in \bR_{>0},v_{i}\in \bR ^{n}\}$.\\
    A polyhedral cone is said to be \emph{rational} when it is generated by integers vectors, \emph{i.e.} $C=\{a_{1}v_{1}+\cdots +a_{k}v_{k}\mid a_{i}\in \bR _{>0},v_{i}\in \mathbb {\bZ} ^{n}\}$
\end{definition}

\begin{theorem}[\cite{AMRT}] \label{thm1.1}
Let $C$ be a homogeneous self-dual cone in a real vector space $V$ with $\bQ$-structure. Let $\textup{Aut}(C)$ be the automorphism group of $C$ and $\mathcalboondox{Aut}(C)$ be the associated reductive algebraic group which exists in view of Theorem \ref{reductive}. Assume that the connected component of identity $\mathcalboondox{Aut}(C)^0$ is defined over $\bQ$. Then, for any arithmetic subgroup $\Gamma$ of $\mathcalboondox{Aut}(C)^0$ there exists a rational polyhedral cone $\Pi \subset C^+$ such that $(\Gamma \cdot \Pi) \cap C = C$.
\end{theorem}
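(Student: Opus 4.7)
My plan is to reduce to the indecomposable case and then invoke the classical reduction theory of Borel–Harish-Chandra and Minkowski–Ash. First, by Theorem \ref{thm1.2}, any homogeneous self-dual cone $C$ decomposes as a direct sum $C = \bigoplus_i C_i$ of indecomposable homogeneous self-dual cones, each isomorphic to a matrix cone $\mathcal{P}_r(\bF)$ for $\bF \in \{\bR,\bC,\bH\}$, a Lorentz/spherical cone, or the exceptional $27$-dimensional octonionic cone. Since $\prod_i \textup{Aut}(C_i)$ is of finite index in $\textup{Aut}(C)$ and since the decomposition can be chosen compatibly with the $\bQ$-structure, I would find a finite-index arithmetic subgroup $\Gamma_0 \le \Gamma$ that respects the decomposition, construct rational polyhedral cones $\Pi_i \subset C_i^+$ satisfying $(\Gamma_0 \cdot \Pi_i) \cap C_i = C_i$, take the direct sum $\Pi_0 = \bigoplus_i \Pi_i$, and then finally enlarge $\Pi_0$ to a $\Gamma$-cover $\Pi$ by taking the convex hull of $\Pi_0$ together with a finite system of coset representatives of $\Gamma_0$ in $\Gamma$ applied to $\Pi_0$ (finiteness preserves rational polyhedrality).

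Having reduced to the indecomposable case, the key geometric input is that $\textup{Aut}(C)^0$ acts transitively on $C$ by Theorem \ref{groupofcone}, so that picking a rational base point $x_0 \in C \cap V_\bQ$ and letting $K$ be its stabilizer identifies $C$ with the symmetric space $\textup{Aut}(C)^0/K$. Under the hypothesis that $\mathcalboondox{Aut}(C)^0$ is defined over $\bQ$, the arithmetic subgroup $\Gamma$ has finite covolume in $\textup{Aut}(C)^0$, and Borel–Harish-Chandra reduction theory provides a Siegel fundamental set $\mathfrak{S} \subset \textup{Aut}(C)^0$ such that $\Gamma \cdot \mathfrak{S} = \textup{Aut}(C)^0$ and only finitely many $\gamma \in \Gamma$ satisfy $\gamma \mathfrak{S} \cap \mathfrak{S} \ne \emptyset$. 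Projecting $\mathfrak{S}$ to $C$ via evaluation at $x_0$ yields a coarse fundamental set $\mathfrak{S} \cdot x_0 \subset C$ that already covers $C$ under $\Gamma$. The Siegel set is built from an $\bR$-split torus and a unipotent part, and each piece can be controlled using elementary polyhedral data (Minkowski-reduced bases of lattices, in the matrix cone case), which is the bridge from a semialgebraic Siegel set to a rational polyhedral cone.

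The main obstacle — and the heart of Ash's contribution in \cite{AMRT} — is to replace the semialgebraic Siegel set with an honest \emph{rational polyhedral} cone $\Pi \subset C^+$ whose $\Gamma$-translates still exhaust $C$. The strategy is to observe that the reducedness conditions defining $\mathfrak{S}$ are given by finitely many polynomial inequalities with rational coefficients in a suitable basis of $V$, and to use the facts that (a) the closure of $\mathfrak{S} \cdot x_0$ meets only finitely many $\Gamma$-translates of itself, and (b) each rational point of $C$ can be moved into an a priori bounded region. Covering this bounded region by finitely many rational simplicial cones based at rational points of $C^+$ produces the required $\Pi$, with $\Pi \subset C^+$ by construction since every generator is chosen rational in $\overline{C}$. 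The verification that $(\Gamma \cdot \Pi) \cap C = C$ then follows because the Siegel set already had this covering property, and $\Pi$ was chosen to swallow $\mathfrak{S} \cdot x_0$ up to finitely many $\Gamma$-translates. The delicate part is ensuring rationality of the extremal rays of $\Pi$ while preserving the covering property, which is precisely where $C^+$ (the rational hull) rather than $C$ itself appears in the statement.
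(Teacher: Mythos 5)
First, note that the paper does not prove this statement at all: it is quoted verbatim from \cite{AMRT} (Ash's chapter on cores of self-adjoint cones), so there is no internal proof to compare yours against; your attempt has to be judged against Ash's actual argument.

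Your opening reduction contains a genuine gap that sinks the plan. The decomposition $C=\bigoplus_i C_i$ into indecomposable cones from Theorem \ref{thm1.2} is a decomposition over $\bR$, and in general it is \emph{not} compatible with the $\bQ$-structure on $V$: the subspaces spanned by the individual $C_i$ need not be defined over $\bQ$, and the factors may be permuted by the Galois action. The basic example is $V=F\otimes_{\bQ}\bR\simeq\bR^2$ for a real quadratic field $F$, with $C$ the cone of totally positive elements: $C$ is the positive quadrant, its indecomposable factors are the two coordinate half-lines, and each half-line meets $V_{\bQ}=F$ only in $0$. Hence $C_i^+=\{0\}$ and no rational polyhedral $\Pi_i\subset C_i^+$ with $(\Gamma_0\cdot\Pi_i)\cap C_i=C_i$ can exist; the correct $\Pi$ (the cone spanned by $1$ and a fundamental unit) is two-dimensional and is not a direct sum of cones in the factors. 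This is exactly the classical Dirichlet-unit situation the theorem is meant to cover, so the reduction to indecomposable pieces cannot be the first step. Ash's proof instead works directly with the reductive $\bQ$-group $\mathcalboondox{Aut}(C)^0$ and its Siegel sets, without ever splitting $C$ over $\bR$. Your second and third paragraphs describe that Siegel-set strategy in plausible but very loose terms (in particular, the image of a Siegel set in $C$ is an unbounded cone, not a ``bounded region,'' and the passage from a semialgebraic Siegel set to a rational polyhedral cone is the hard core of Ash's argument, essentially the construction of the ``core'' as the convex hull of a $\Gamma$-orbit of a rational point); those paragraphs are acceptable as a sketch, but they do not rescue the flawed reduction, and the proof as proposed does not go through.
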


\noindent As Ash pointed out in \cite[pag. 75]{AMRT} starting from $\Pi$ is possible to construct a rational fundamental domain for the action of $\Gamma$ on $C$. \\
We also recall the following lemma which is seminal work of Looijenga \cite[Definition-Proposition 4.1, Application 4.14]{Loo}.

\begin{lemma} \label{thm1.1a}
Let $\Lambda$ be a finitely generated free $\bZ$-module, and let $C$ be a strict open cone in the $\bR$-vector space $\Lambda_{\bR} := \Lambda \otimes \bR$. Let $C^+$ be the convex hull of $ \ol{C} \cap \Lambda_{\bQ}$. Let $(C^\vee)^\circ \subset (\Lambda_{\bR})^\vee$ be the interior of the dual cone of $C$. Let $\Gamma$ be a subgroup of $\textup{GL}(\Lambda)$ which preserves the cone $C$. Suppose that
\begin{itemize}
    \item there exists a rational polyhedral cone $\Pi \subset C^+$ such that $\Gamma\cdot \Pi\supset C$;
    \item  there exists an element $\eta \in (C^\vee)^\circ \cap (\Lambda_{\bQ})^\vee$ whose stabilizer in $\Gamma$ (with respect to the dual action of $\Gamma$ on $(\Lambda_{\bQ})^\vee$) is trivial.
\end{itemize}
Then $\Gamma\cdot \Pi = C^+$, and in fact there exists a rational polyhedral cone $\Pi' \subset C^+$ which is a fundamental domain for the action of $\Gamma$ on $C^+$.
\end{lemma}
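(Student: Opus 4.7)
The plan is to reduce the statement to a finiteness claim about $\Sigma := \{\gamma \in \Gamma : \gamma \Pi \cap \Pi \neq \emptyset\}$, and then proceed in two stages: first establish the orbit equality $\Gamma \cdot \Pi = C^+$, and then refine $\Pi$ into a bona fide fundamental domain $\Pi'$.

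\textbf{Stage 1 (covering).} The inclusion $\Gamma \cdot \Pi \subseteq C^+$ is immediate: $\Pi \subseteq C^+$ and $\Gamma$ preserves both $\overline{C}$ and $\Lambda_{\bQ}$, hence the convex hull $C^+$ of $\overline{C} \cap \Lambda_{\bQ}$. For the reverse inclusion, given $x \in C^+$: if $x \in C$, the first hypothesis already yields $x \in \Gamma \cdot \Pi$. If instead $x \in \partial C$, I would approximate from the interior by $x_n := x + \tfrac{1}{n} y$ with $y \in C \cap \Lambda_{\bQ}$ fixed. Writing $x_n = \gamma_n \pi_n$ with $\pi_n \in \Pi$, the pairings $\langle \eta, \pi_n \rangle = \langle \gamma_n^{-1} \cdot \eta, x_n \rangle$ stay bounded since $x_n \to x \in \overline{C}$ and $\eta \in (C^\vee)^\circ$. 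Granted the finiteness claim below, $\{\gamma_n\}$ lies in a finite set, so a subsequence is constant, and closedness of $\gamma \Pi$ delivers $x \in \gamma \Pi$.

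\textbf{Stage 2 (fundamental domain).} With $\Sigma$ as above, I would define the ``$\eta$-Dirichlet'' subcone
\[
\Pi' := \{x \in \Pi : \langle \eta, x \rangle \leq \langle \gamma^{-1} \cdot \eta, x \rangle \text{ for all } \gamma \in \Sigma\}.
\]
Once $\Sigma$ is finite, $\Pi'$ is a rational polyhedral cone in $C^+$ (a finite intersection of $\Pi$ with rational half-spaces). The covering $\Gamma \cdot \Pi' = C^+$ then follows from Stage 1, since any $\pi \in \Pi$ can be moved into $\Pi'$ by choosing $\delta \in \Sigma$ that minimizes $\langle \delta^{-1} \cdot \eta, \pi \rangle$. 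For the disjointness of interiors, if $\gamma \cdot \mathrm{Int}(\Pi') \cap \mathrm{Int}(\Pi') \neq \emptyset$ on an open set, the defining inequalities force $\langle \eta - \gamma^{-1} \cdot \eta, \cdot \rangle \equiv 0$ there, hence $\gamma \in \mathrm{Stab}_\Gamma(\eta) = \{1\}$ by the second hypothesis.

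\textbf{The main obstacle} is the finiteness of $\Sigma$, on which both stages rest. Since $\Gamma \subseteq \textup{GL}(\Lambda)$ acts $\bZ$-integrally on $\Lambda^\vee$, the orbit $\Gamma \cdot \eta$ of the rational vector $\eta$ lies in a translate of the lattice $\tfrac{1}{N}\Lambda^\vee$ for some common denominator $N$. On the other hand, since $\eta \in (C^\vee)^\circ$ is strictly positive on $\overline{C} \setminus \{0\}$, if $\gamma \Pi \cap \Pi \neq \emptyset$ then $\gamma^{-1} \cdot \eta$ must be bounded on a compact rational slice of $\Pi$. A bounded subset of a lattice is finite, yielding $\#\Sigma < \infty$. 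The trivial-stabilizer condition on $\eta$ ensures that distinct $\gamma \in \Gamma$ produce genuinely distinct chambers, confirming that $\Pi'$ is a true fundamental domain rather than a quotient by a residual symmetry.
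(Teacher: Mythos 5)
The paper does not actually prove this lemma --- it is quoted from Looijenga (and the overall architecture you chose, a Dirichlet domain cut out by the inequalities $\langle \eta, x\rangle \le \langle \gamma^*\eta, x\rangle$, is indeed the standard route). However, the finiteness claim on which both of your stages rest is false: $\Sigma = \{\gamma \in \Gamma : \gamma\Pi \cap \Pi \neq \emptyset\}$ is in general \emph{infinite}. The cone $\Pi$ produced by hypothesis (1) must reach rational rays of $\partial\ol{C}$ whenever $C^+ \neq C \cup \{0\}$ (otherwise $\Gamma\cdot\Pi$ could never exhaust $C^+$), and such boundary rays typically have infinite stabilizers in $\Gamma$, every element of which lies in $\Sigma$. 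Concretely, take $\Lambda = \textup{Sym}^2\bZ^2$, $C$ the cone of positive-definite binary quadratic forms, $\Gamma$ the image of $\textup{SL}_2(\bZ)$, and $\Pi$ the Gauss cone of reduced forms $\{0 \le b \le a \le c\}$: this $\Pi$ satisfies hypothesis (1), has the extreme ray $\bR_{\ge 0}\, y^2 \subset \partial\ol{C}$, and that ray is fixed pointwise by the infinite unipotent subgroup $\{(x,y)\mapsto (x+ny,y)\}$, so $\Sigma$ is infinite. (This is exactly $N^1(E\times E)$ for $E$ without CM, i.e.\ the situation the present paper needs.) Your justification breaks precisely here: for $x \in \Pi \cap \partial\ol{C}$ the set $\{m \in \ol{C^\vee} : \langle m, x\rangle \le t\}$ is \emph{not} compact, so ``$\gamma\Pi\cap\Pi\neq\emptyset$ implies $\gamma^{-1}\cdot\eta$ bounded on a compact slice'' does not follow; indeed in the example $\langle \gamma_n^{-1}\cdot\eta, x^2\rangle = \eta(x^2) + 2n\,\eta(xy) + n^2\eta(y^2) \to \infty$, so the functionals $\gamma^{-1}\cdot\eta$, $\gamma\in\Sigma$, form an unbounded subset of $\tfrac1N\Lambda^\vee$.

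The same unboundedness undermines Stage 1: knowing $\eta \in (C^\vee)^\circ$ and $x_n \to x \in \ol{C}$ only gives $\langle \gamma_n^{-1}\cdot\eta, x_n\rangle \ge 0$, not an upper bound, since the orbit $\Gamma\cdot\eta$ is unbounded. The finiteness that actually powers Looijenga's proof is different and more delicate: one works with the full Dirichlet domain $D_\eta = \{x \in C^+ : \langle\eta,x\rangle \le \langle\gamma^*\eta,x\rangle \ \forall \gamma \in \Gamma\}$ and shows (i) for each $x \in C^+$ the infimum $\inf_\gamma \langle\gamma^*\eta,x\rangle$ is attained because, writing $x$ in terms of rational generators of $\ol{C}$, the possible values form a discrete subset of $\bR_{\ge 0}$ (integrality of $\Gamma$ plus rationality of $\eta$), and (ii) only finitely many of the restrictions $\gamma^*\eta|_{\Pi}$ fail to dominate $\eta|_{\Pi}$, which is an argument at the generators of $\Pi$, not a statement about $\Sigma$. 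Your disjointness step via the trivial stabilizer of $\eta$ is correct and is exactly how hypothesis (2) is used; but the covering step of Stage 2 also does not typecheck as written, since the translate $\delta\pi$ lies in $\delta\Pi$ rather than in $\Pi \supseteq \Pi'$. So the proposal needs its finiteness input replaced wholesale before either stage goes through.
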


\section{The cone conjecture for GHV}\label{sec.conjGHV}
In this section, we see how the Conjecture \ref{conj} can be reformulated for étale quotients and in particular for GHV.

\begin{lemma}\label{eta}
    Let $Y$ be a normal projective variety. Then there exists $\eta\in (\textup{Amp}(Y)^\vee)^\circ \cap (N^1(Y)_{\bQ})^\vee$ such that its stabilizer for the action of $\textup{Aut}(Y)$ (on $(N^1(Y)_{\bQ})^\vee$) is trivial.
\end{lemma}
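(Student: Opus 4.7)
The plan is to exhibit $\eta$ explicitly as the linear functional $D\mapsto L^{n-1}\cdot D$ coming from a carefully chosen rational ample divisor $L$ on $Y$. The kernel of the natural representation $\textup{Aut}(Y)\to \textup{GL}(N^1(Y))$ acts trivially on $(N^1(Y)_{\bQ})^\vee$ and therefore fixes every $\eta$; it is thus enough to find $\eta$ whose stabilizer in the image $G:=\textup{im}(\textup{Aut}(Y)\to \textup{GL}(N^1(Y)))$ is trivial. Since $G$ preserves the lattice $N^1(Y)_{\bZ}$, it is a discrete, hence countable, subgroup of $\textup{GL}(N^1(Y)_{\bR})$; in particular, for each $g\ne 1$ in $G$, the fixed locus $\textup{Fix}(g)\subsetneq N^1(Y)_{\bR}$ is a proper $\bQ$-linear subspace, and similarly for its dual action.

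First I would fix any rational ample divisor $L_0$ on $Y$ and invoke the classical finiteness of the group of polarization-preserving automorphisms (Matsusaka--Mumford type) to conclude that $H_0:=\textup{Stab}_G([L_0])$ is finite; write $H_0\setminus\{1\}=\{g_1,\dots,g_k\}$. Since $\bigcup_i \textup{Fix}(g_i)$ is a \emph{finite} union of proper $\bQ$-subspaces, rational ample divisors $L_1$ lying outside it are dense in the ample cone, so I pick one, obtaining $H_0\cap\textup{Stab}_G([L_1])=\{1\}$. Now set $L:=L_0+\varepsilon L_1$ for a small positive rational $\varepsilon$ to be specified; sums of ample classes are ample, so $L$ is ample for every $\varepsilon>0$. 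The relation $g\cdot[L]=[L]$ translates into the vector equation $(g-1)L_0+\varepsilon(g-1)L_1=0$: if $g\in H_0$ it forces $g\in\textup{Stab}_G([L_1])$, hence $g=1$; if $g\notin H_0$ the equation admits at most one real solution $\varepsilon_g$. Since $G$ is countable, the set $\{\varepsilon_g\}$ is countable too, so one can choose $\varepsilon\in\bQ_{>0}$ avoiding it, after which $\textup{Stab}_G([L])=\{1\}$.

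With $L$ in hand, set $\eta(D):=L^{n-1}\cdot D$. Then $\eta\in (N^1(Y)_{\bQ})^\vee$ since $L$ is rational, and standard positivity --- concretely, that $L^{n-1}$ represents a movable curve class lying in the interior of the cone dual to the pseudo-effective cone --- gives $\eta(A)>0$ for every nonzero nef $A$, placing $\eta$ in $(\textup{Amp}(Y)^\vee)^\circ$. To compute the stabilizer, note that $g\cdot\eta=\eta$ means precisely $(g\cdot L)^{n-1}=L^{n-1}$ in $(N^1(Y))^\vee$; the equality case of the Khovanskii--Teissier inequalities (a consequence of Alexandrov--Fenchel) implies that the assignment $L\mapsto L^{n-1}$ is injective on the ample cone --- equality forces the two ample classes to be proportional, after which $(n-1)$-homogeneity forces $g\cdot L=L$. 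Therefore $g\in\textup{Stab}_G([L])=\{1\}$, giving the desired triviality of the stabilizer.

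The step I expect to require the most care is the injectivity of $L\mapsto L^{n-1}$ on the ample cone together with the strict positivity $L^{n-1}\cdot A>0$ for nonzero nef $A$ on a normal (not necessarily smooth) projective variety; for smooth $Y$ both are immediate from Alexandrov--Fenchel and Kleiman, and in the general case one should pull back to a resolution $\widetilde{Y}\to Y$, apply the smooth statements there, and descend to $Y$ via the projection formula.
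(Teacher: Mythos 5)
The paper does not actually prove this lemma: it just points to \cite[Proposition 2.3]{GLW22}. Your attempt to give a self-contained argument is therefore going beyond what the paper does, but it contains a genuine gap at the decisive step. Having reduced to the equation $(g-1)L_0+\varepsilon(g-1)L_1=0$, you conclude that since each $g\notin H_0$ contributes at most one bad value $\varepsilon_g$ and $G$ is countable, one can pick $\varepsilon\in\bQ_{>0}$ avoiding the bad set. Countability is not enough here: a countable subset of $\bR_{>0}$ can contain \emph{every} positive rational (e.g.\ it could literally be $\bQ_{>0}$), and you cannot escape to an irrational $\varepsilon$ because you need $L$, hence $\eta$, to be a rational class. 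This is precisely the crux of the lemma --- if one could always find a rational point avoiding a countable union of proper $\bQ$-subspaces, the statement would follow immediately from the countability of the image of $\textup{Aut}(Y)$ in $\textup{GL}(N^1(Y)_{\bZ})$, with no need for the ample/dual-cone geometry at all. What is actually true, and what rescues the argument, is a \emph{finiteness} statement: any $g$ with $\varepsilon_g\in(0,1]$ fixes the ample class $h_{\varepsilon_g}=[L_0+\varepsilon_g L_1]$, hence preserves the lattice, the nef cone, and the functional $\langle\,\cdot\,,h_{\varepsilon_g}^{\,n-1}\rangle$, and therefore permutes the finite set $\{D\in\textup{Nef}(Y)\cap N^1(Y)_{\bZ}: D\cdot h_{\varepsilon}^{\,n-1}\le C\}$; since for $\varepsilon\in[0,1]$ these sets are squeezed between two fixed finite sets, one of which spans $N^1(Y)_{\bR}$ for $C\gg0$, only finitely many $g$ can occur, so the bad set in $(0,1]$ is finite and a rational $\varepsilon$ exists. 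Without some such compactness-plus-lattice input your argument does not close.

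Two further remarks. First, the same finiteness mechanism is what justifies your appeal to ``Matsusaka--Mumford'' for the finiteness of $H_0=\textup{Stab}_G([L_0])$ (the image of the stabilizer in $\textup{GL}(N^1)$ permutes a finite spanning set of bounded-degree integral nef classes), so it is worth stating once and using twice. Second, your detour through $D\mapsto L^{n-1}\cdot D$ forces you to invoke Teissier proportionality (the equality case of Khovanskii--Teissier) on a possibly singular normal variety to get injectivity of $M\mapsto M^{n-1}$ on the ample cone; this is true but heavy, and it is avoidable: one can run the very same perturbation argument directly with rational points $\eta_0,\eta_1$ in the interior of the dual cone $(\textup{Amp}(Y)^\vee)^\circ$, which is what the cited proof in \cite{GLW22} essentially does, bypassing intersection theory entirely.
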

\begin{proof}
    See the proof of \cite[Proposition 2.3]{GLW22}
\end{proof}

\begin{proposition}\label{conjquotient}
Let $X$ be a compact projective manifold and $G\le \textup{Aut}(X)$ a finite group that acts freely on it. We denote $\pi: X\longrightarrow Y=X/G$. Assume the existence of a rational polyhedral cone $\Pi\subset \textup{Nef}(X)^G\cap \textup{Eff}(X)$ such that $\textup{Amp}(X)^G\subset H\cdot \Pi$ for some $H\le \textup{N}_{\textup{Aut}(X)}(G)$. Then $Y$ satisfies conjecture \ref{conj2}, \emph{i.e.} the Morrison Cone Conjecture and $\textup{Nef}(Y)^e=\textup{Nef}(X)^+$.
\end{proposition}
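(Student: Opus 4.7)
The plan is to use the pullback isomorphism $\pi^* \colon N^1(Y) \xrightarrow{\sim} N^1(X)^G$ from Proposition \ref{p1.7}(ii) to transfer the data from $X$ to $Y$, then apply Looijenga's Lemma \ref{thm1.1a} on $N^1(Y)$ to obtain a rational polyhedral fundamental domain for $\textup{Aut}(Y)$ acting on $\textup{Nef}(Y)^+$. The effectiveness hypothesis $\Pi \subset \textup{Eff}(X)$ will at the same time deliver the equality $\textup{Nef}(Y)^e = \textup{Nef}(Y)^+$.

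First, I would set $\Pi_Y := (\pi^*)^{-1}(\Pi) \subset N^1(Y)$. Because $\pi^*$ is a $\bQ$-linear isomorphism identifying $\textup{Nef}(Y)$ with $\textup{Nef}(X)^G$, the cone $\Pi_Y$ is rational polyhedral and contained in $\textup{Nef}(Y)$. The inclusion $\Pi_Y \subset \textup{Eff}(Y)$ follows from a standard étale-cover argument: if $\pi^* D_Y \in \textup{Eff}(X)$, then $|G|\,D_Y = \pi_*\pi^* D_Y$ is the pushforward of an effective class by the finite map $\pi$, hence effective on $Y$. Next, since $G$ acts trivially on $N^1(X)^G$, replacing $H$ by $HG$ does not change $H\cdot \Pi$, so we may assume $G \le H \le \textup{N}_{\textup{Aut}(X)}(G)$. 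By Proposition \ref{p1.7}(i), $H/G$ embeds in $\textup{Aut}(Y)$, and the commutative square $\alpha_X\circ \pi = \pi\circ \alpha_Y$ in the proof of that proposition shows that $\pi^*$ intertwines the pullback actions. Applying $(\pi^*)^{-1}$ to the hypothesis $\textup{Amp}(X)^G \subset H\cdot \Pi$ therefore yields
\[
\textup{Amp}(Y) \;\subset\; (H/G)\cdot \Pi_Y \;\subset\; \textup{Aut}(Y)\cdot \Pi_Y.
\]

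This is exactly the first hypothesis of Lemma \ref{thm1.1a} for $V=N^1(Y)_{\bR}$, $\Lambda = N^1(Y)_{\bZ}$, $C=\textup{Amp}(Y)$, $\Gamma = \textup{Aut}(Y)$, and the rational polyhedral cone $\Pi_Y \subset \textup{Nef}(Y)^+$; the second hypothesis, existence of an element $\eta$ with trivial stabilizer, is supplied by Lemma \ref{eta}. Looijenga's lemma then produces a rational polyhedral cone $\Pi' \subset \textup{Nef}(Y)^+$ which is a fundamental domain for $\textup{Aut}(Y)$ on $\textup{Nef}(Y)^+$ — this is Morrison's version of the Cone Conjecture, part (1) of Conjecture \ref{conj2} — together with the equality $\textup{Aut}(Y)\cdot \Pi_Y = \textup{Nef}(Y)^+$.

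For the remaining statement $\textup{Nef}(Y)^e = \textup{Nef}(Y)^+$, the inclusion $\textup{Nef}(Y)^+ \subset \textup{Nef}(Y)^e$ follows at once from $\Pi_Y \subset \textup{Nef}(Y)^e$, the identity $\textup{Nef}(Y)^+ = \textup{Aut}(Y)\cdot \Pi_Y$, and the $\textup{Aut}(Y)$-invariance of $\textup{Nef}(Y)^e$. The reverse inclusion is the general fact recalled in the introduction (see \cite[Theorem 2.15]{LOP20}) that every effective nef class on a smooth projective $K$-trivial variety lies in the convex hull of its rational nef classes. I expect the only delicate point to be cleanly packaging the equivariance of $\pi^*$ with respect to $\textup{N}_{\textup{Aut}(X)}(G)$ and $\textup{Aut}(Y)$, together with the integral structure compatibility so that Lemma \ref{thm1.1a} applies verbatim; all remaining steps are formal once those pieces are verified.
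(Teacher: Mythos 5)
Your argument is correct and follows essentially the same route as the paper: the paper works with the pushforward cone $\pi_*(\Pi)$ (which coincides, up to a positive scalar, with your $(\pi^*)^{-1}(\Pi)$), derives the same inclusions $\textup{Amp}(Y)\subset (H/G)\cdot\pi_*(\Pi)\subset \textup{Aut}(Y)\cdot\pi_*(\Pi)$, and applies Lemma \ref{eta} together with Looijenga's Lemma \ref{thm1.1a} to get the fundamental domain and the equality $\textup{Nef}(Y)^+=\textup{Aut}(Y)\cdot\pi_*(\Pi)\subset\textup{Nef}(Y)^e$. Your extra details (replacing $H$ by $HG$, the equivariance of $\pi^*$, and the effectivity of $\pi_*\pi^*D_Y$) are points the paper leaves implicit, so this is the same proof with the bookkeeping made explicit.
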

\begin{proof}
Let us consider $\pi_\ast(\Pi)$: it defines a rational polyhedral cone in $\text{Nef}(Y)^e\subseteq \text{Nef}(Y)^+$ (the last inclusion holds by \cite[Lemma 5.1]{LW23}) such that: $$\text{Amp}(Y)\subset (H/G)\cdot \pi_\ast(\Pi)\subset \text{Aut}(Y)\cdot \pi_\ast(\Pi). $$ These inclusions together with Lemma \ref{eta} allow us to apply  Lemma \ref{thm1.1a} and obtain that $\text{Aut}(X)\cdot \pi_\ast(\Pi)=\text{Nef}(Y)^+$. In particular, there is a rational fundamental domain for the action of $\text{Aut}(Y)$ on $\text{Nef}(Y)^+$. Therefore, part (1) of conjecture \ref{conj2} is verified. 
Moreover, since $\pi_\ast(\Pi)\subset \text{Nef}(Y)^e$ we have $\text{Nef}(X)^+=\text{Aut}(X)\cdot \pi_\ast(\Pi)\subset \text{Nef}(Y)^e$, thus we obtain $\text{Nef}(X)^e=\text{Nef}(X)^+$. Therefore part (2) of conjecture \ref{conj2} is satisfied. 
\end{proof}

\begin{corollary}\label{conjGHV}
Let $Y$ be a GHV. Then
\begin{itemize}
    \item part \textup{(ii)} (the birational version) of the Cone Conjecture \ref{conj} follows from part \textup{(i)} (the automorphism version)
    \item part (2) of Conjecture \ref{conj2} is verified, \emph{i.e.} $\textup{Nef}(Y)^e=\textup{Nef}(Y)^+$. In particular $\textup{Nef}(Y)^e=\textup{Nef}(Y)^+=\textup{Nef}(Y)$
\end{itemize}
Moreover, the Morrison's Cone Conjecture \ref{conj2} is equivalent to the Kawamata's Cone Conjecture \ref{conj}.
\end{corollary}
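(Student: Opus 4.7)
The plan is to reduce everything to Proposition \ref{p1.9}, which asserts that on a GHV nef and effective divisor classes coincide in $N^1(Y)$.

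First, I would establish the chain of equalities $\textup{Nef}(Y)^e = \textup{Nef}(Y)^+ = \textup{Nef}(Y)$. Since $\textup{Nef}(Y) = \textup{Eff}(Y)$ by Proposition \ref{p1.9}, the intersection $\textup{Nef}(Y)^e = \textup{Nef}(Y) \cap \textup{Eff}(Y)$ equals $\textup{Nef}(Y)$. Combined with the general inclusion $\textup{Nef}(Y)^e \subseteq \textup{Nef}(Y)^+$ recalled in the Introduction and the trivial $\textup{Nef}(Y)^+ \subseteq \textup{Nef}(Y)$ (since $\textup{Nef}(Y)^+$ is the convex hull of rational points inside the closed cone $\textup{Nef}(Y)$), this forces all three cones to coincide, settling part (2) of Conjecture \ref{conj2}.

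I would then identify the movable cone. The inclusion $\textup{Nef}(Y) \subseteq \overline{\textup{Mov}(Y)}$ is standard; conversely every movable divisor is effective, so $\overline{\textup{Mov}(Y)} \subseteq \overline{\textup{Eff}(Y)} = \textup{Eff}(Y) = \textup{Nef}(Y)$, where the first equality uses that $\textup{Nef}(Y)$ is closed. Intersecting with $\textup{Eff}(Y)$ gives $\overline{\textup{Mov}(Y)}^e = \textup{Nef}(Y)^e$.

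To deduce part (ii) of Conjecture \ref{conj} from part (i), I would observe that the universal cover of a GHV is $\bC^n$, which contains no rational curves; hence $Y$ does not either. A standard argument via resolution of indeterminacy (the exceptional divisors are uniruled, so their images in $Y$ would produce rational curves) then gives $\textup{Bir}(Y) = \textup{Aut}(Y)$. Combined with $\overline{\textup{Mov}(Y)}^e = \textup{Nef}(Y)^e$, any rational polyhedral fundamental domain produced by part (i) for $\textup{Aut}(Y)$ acting on $\textup{Nef}(Y)^e$ serves automatically as one for $\textup{Bir}(Y)$ acting on $\overline{\textup{Mov}(Y)}^e$, yielding part (ii). Finally, the equivalence of Morrison's version (Conjecture \ref{conj2}) and Kawamata's version (Conjecture \ref{conj}) is immediate: both are stated on cones that, by the previous steps, coincide with $\textup{Nef}(Y)$. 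The only non-formal input is the identification $\textup{Bir}(Y) = \textup{Aut}(Y)$ for GHVs, which is where some care is needed; the remainder is a direct unpacking of Proposition \ref{p1.9} and the definitions.
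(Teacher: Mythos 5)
Your proposal is correct and follows essentially the same route as the paper: everything is reduced to Proposition \ref{p1.9} ($\textup{Nef}(Y)=\textup{Eff}(Y)$), combined with the general inclusions $\textup{Nef}(Y)^e\subseteq\ol{\textup{Mov}(Y)}^e\subseteq\textup{Eff}(Y)$ and $\textup{Nef}(Y)^e\subseteq\textup{Nef}(Y)^+\subseteq\textup{Nef}(Y)$. The one place you go beyond the paper is in justifying $\textup{Bir}(Y)=\textup{Aut}(Y)$ via the absence of rational curves on a quotient of $\bC^n$ — a point the paper leaves implicit but which is indeed needed to pass from the automorphism version to the birational version.
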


\begin{proof}
In general the following inclusions hold: $\text{Nef}(Y)^e\subseteq\ol{\text{Mov}(Y)}^e\subseteq\text{Eff}(Y)$. Since by Proposition \ref{p1.9} we have $\text{Nef}(Y)=\text{Eff}(Y)$, it follows $\text{Nef}(Y)^e=\ol{\text{Mov}(Y)}^e=\text{Eff}(Y)$. This implies the first statement. \\
We prove that $\text{Nef}(Y)^e$ coincides $\text{Nef}(Y)^+$. 
It holds $\text{Nef}(Y)^e\subseteq \text{Nef}(Y)^+$ by \cite[Lemma 5.2]{LW23}. We observe that $\text{Nef}(Y)^+=\text{Amp}(Y)^+\subset \text{Nef}(Y)$, hence since $\text{Nef}(Y)^e=\text{Nef}(Y)$ we obtain the reverse inclusion and so the equality.
\end{proof}

\section{Proof of the main theorem} \label{sec.proof}
\noindent In this section we are going to proof Theorem \ref{main1}, namely the Cone Conjecture \ref{conj} for GHV. Due to the results of the preceding section, Proposition \ref{conjquotient} and corollary \ref{conjGHV}, it's enough to provide the existence of a rational polyhedral cone $\Pi\subset (\text{Nef}(X)^G)^+$ such that $\text{Amp}(X)^G\subset C_{\text{Aut}(X)}(G)\cdot \Pi$. To achieve this, we adopt the following strategy: we establish that the cone $\text{Amp}(X)^G$ is a homogeneous self-dual cone and that the centralizer $C_{\text{Aut}(X)}(G)$ defines an action of an arithmetic group on it. This understanding allow us to invoke the main result of reduction theory, outlined in Section \ref{sec.convexgeo}, thereby culminating in the affirmative assertion of the existence of the desired $\Pi$.\\

\noindent In the following $X$ is an abelian variety and $G\le \text{Aut}(X)$ is a finite group.
\subsection{The $G$-invariant $\bR$-algebra $\textup{End}_{\bR}(X)^G$ }
Let us recall the action defined in Theo\-rem \ref{alpha}:
\begin{center}
    \begin{tikzcd}
        \alpha: \text{End}_{\bR}(X)^\times\arrow[r] & \textup{GL}(\text{End}_{\bR}(X)) \\
        \varphi \arrow[r,mapsto] & \alpha(\varphi): l\mapsto \varphi'\circ l \circ \varphi
    \end{tikzcd}
\end{center}
which extends the action of $\text{Aut}(X)$ on $N^1(X)$ by pull back.
Let us denote by $\text{Lin}(G)$ the group generated by the linear part of every $g\in G$. Since $\text{Lin}(G)\le \text{End}
(X)^\times$, it acts on the $\bQ$-algebra $\text{End}_{\bQ}(X)$. For simplicity we say that $G$ acts on $\text{End}_{\bQ}(X)$.

\begin{definition}\label{defDG}
$\textup{End}_{\bQ}(X)^{G}:=\{ \varphi \in \textup{End}_{\bQ}(X) \mid g'\varphi g= \varphi\text{ for every } g\in \text{Lin}(G) \}$.
\end{definition}

In this subsection we prove that $\textup{End}_{\bQ}(X)^{G}$ is a finite dimensional $\bQ$-algebra with a positive definite involution. Additionally, we deduce that $\textup{End}_{\bQ}(X)^{G}\otimes \bR$ is isomorphic to a certain algebra of matrices, similar to Theorem \ref{dec-endo-algebra}. More precisely:

\begin{theorem}\label{dec-Ginvarinatalgebra}
Let $X$ be an abelian variety and $G\le \textup{Aut}(X)$ be a finite group (which does no contain any translation). 
Then: 
\begin{romanenumerate}
    \item $\textup{End}_{\bQ}(X)^G$ is a finite dimensional $\bQ$-algebra with an involution $\iota$ given by $\iota(x)=x'$ for every $x\in \textup{End}_{\bQ}(X)^G \subseteq \textup{End}_{\bQ}(X)$ which is positive-definite with respect to the trace reduce over $\bQ$.

    \item We have the following isomorphism of $\bR$ algebras: 
\begin{center}
    \begin{tikzcd}
        \Psi\colon (\textup{End}_{\bQ}(X)^{G}\otimes \bR, \iota) \arrow[r, "\simeq"] & (\prod\limits_i \textup{Mat}_{l_i}(\bR) \times \prod\limits_j \textup{Mat}_{m_i}(\bC) \times \prod\limits_k \textup{Mat}_{n_k}(\bH), \dagger)
    \end{tikzcd}
    \end{center}
where the involution $\iota$ is sent to the conjugate transpose $\dagger$ on each factor.
\end{romanenumerate}
\end{theorem}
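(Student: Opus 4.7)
The plan is to reduce part (ii) to the classical Theorem \ref{dec-endo-algebra} by choosing a polarization compatible with the $G$-action. Starting from any ample line bundle $L_0$ on $X$, the line bundle $L := \bigotimes_{g \in \textup{Lin}(G)} g^{\ast} L_0$ is ample and $G$-invariant, so $g^{\ast} L \equiv L$ for every $g \in \textup{Lin}(G)$. I would use this $L$ to define the Rosati involution $'$ in \eqref{rosati}. Combining Lemma \ref{pullbackrosati} with $\phi_{g^{\ast}L} = \phi_L$ gives $\hat g \phi_L g = \phi_L$, whence
\[
g' \;=\; \phi_L^{-1} \hat g \phi_L \;=\; g^{-1} \qquad \text{for every } g \in \textup{Lin}(G).
\]

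With this key identity, part (i) follows directly. The operator $\alpha(g)$ from Theorem \ref{alpha} becomes the inner automorphism $\varphi \mapsto g^{-1}\varphi g$, so $\textup{End}_{\bQ}(X)^G$ is the centralizer of $\textup{Lin}(G)$ inside $\textup{End}_{\bQ}(X)$; in particular it is a $\bQ$-subalgebra, finite-dimensional as a subspace of $\textup{End}_{\bQ}(X)$. The Rosati involution restricts to it: applying $'$ to $g^{-1}\varphi g = \varphi$ and using $(g^{-1})' = g$ yields $g^{-1}\varphi' g = \varphi'$. Positivity of the restricted involution $\iota$ with respect to $\textup{Tr}_{\bQ}$ is inherited directly from the positivity of $'$ on the ambient algebra.

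For part (ii), positivity makes the bilinear form $(x,y) \mapsto \textup{Tr}_{\bQ}(x\iota(y))$ non-degenerate, so $\textup{End}_{\bQ}(X)^G$ is semisimple, and Wedderburn gives $\textup{End}_{\bQ}(X)^G \simeq \prod_{\nu} \textup{Mat}_{r_\nu}(D_\nu)$ with $D_\nu$ division $\bQ$-algebras. Positivity also prevents $\iota$ from swapping two distinct simple factors: if it did, then for a nonzero $x$ in one factor $x\iota(x)$ would lie in a different factor and hence vanish, contradicting $\textup{Tr}_{\bQ}(x\iota(x))>0$; since $\iota$ is an involution it therefore fixes each factor. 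Hence $\iota$ restricts to a positive involution on each $\textup{Mat}_{r_\nu}(D_\nu)$, and in particular each $D_\nu$ inherits a positive involution. Albert's classification (the same ingredient underlying Theorem \ref{dec-endo-algebra}) then enumerates the possibilities for $D_\nu \otimes_{\bQ} \bR$; extending scalars and assembling the factors yields the required isomorphism $\Psi$, with matrix blocks over $\bR$, $\bC$ and $\bH$ and $\iota$ matching the conjugate transpose $\dagger$ on each block.

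The main technical subtlety is verifying that $\iota$ preserves each Wedderburn factor. This rests on the positivity of the involution, which in turn relies on the choice of $G$-invariant polarization made in the first step; without that choice the action $\alpha$ would not be by algebra automorphisms and the structural analysis above would not be available. Once the $G$-invariant polarization is fixed, the argument is essentially an equivariant refinement of the decomposition cited from \cite{Pr12}.
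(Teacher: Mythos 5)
Your proposal is correct and follows essentially the same route as the paper: choose a $G$-invariant polarization to get $g'=g^{-1}$, observe that $\textup{End}_{\bQ}(X)^G$ is then the centralizer of $\textup{Lin}(G)$ with the Rosati involution restricting to a positive involution, deduce semisimplicity, show positivity forces $\iota$ to preserve each simple factor, and invoke the classification of simple algebras with positive involution. The only (harmless) difference is that you decompose over $\bQ$ via Wedderburn and Albert and then extend scalars, whereas the paper tensors with $\bR$ first and applies the real classification (Lemma \ref{salgebra}) directly, which sidesteps the bookkeeping of how simple $\bQ$-factors split after base change.
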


Before proving the theorem above we recall fundamental results about finite-dimensional algebra with positive-definite involution.

\begin{definition}
An algebra $\cA$ is called \emph{simple} if $\cA^2 =\{ab \colon a,b\in \cA\} \not= {0}$ and it has no proper ideals.\\
A finite-dimensional algebra $\cA$ is said to be \emph{semisimple} if it can be expressed as a Cartesian product of simple sub-algebras.
\end{definition}

\begin{lemma}[Lemma 8.4.5 \cite{Voi21}] \label{ssalgebra}
Let $\cA_k$ be a $k$-algebra, for a subfield $k\subset \bR$, and $\tau$ be a positive-definite involution with respect to the trace. Then $\cA_k$ is semisimple.
\end{lemma}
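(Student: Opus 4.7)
The plan is to prove the contrapositive: assuming $\cA_k$ is not semisimple, I would derive a contradiction with the positivity of $\tau$. Since $\cA_k$ is finite-dimensional over $k$ (a subfield of $\bR$, hence of characteristic zero), the Artin--Wedderburn theory tells us that $\cA_k$ is semisimple if and only if its Jacobson radical $J$ vanishes; moreover, in the finite-dimensional setting $J$ coincides with the largest nilpotent two-sided ideal of $\cA_k$. I would open the proof by recalling these two standard facts.

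The first key step is to observe that $J$ is stable under $\tau$. Since $\tau$ is an anti-automorphism of $\cA_k$, the image $\tau(J)$ is again a nilpotent two-sided ideal, hence by maximality $\tau(J) \subseteq J$; applying $\tau$ once more yields the reverse inclusion, so $\tau(J) = J$.

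Assume for contradiction $J \neq 0$ and pick any $0 \neq a \in J$. Then $\tau(a) \in J$, and hence $a\tau(a) \in J$, which in particular is nilpotent. But any nilpotent element of a finite-dimensional $k$-algebra has vanishing trace, because its left-multiplication operator acts nilpotently on $\cA_k$ and therefore has all eigenvalues equal to zero. This forces $\text{Tr}(a\tau(a)) = 0$, contradicting positive-definiteness of $\tau$, which requires $\text{Tr}(a\tau(a)) > 0$ for every nonzero $a$. Thus $J = 0$ and $\cA_k$ is semisimple.

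The only potential obstacle is fixing the correct convention for the trace: the natural and universal choice here is the trace of the regular representation of $\cA_k$ over $k$ (equivalently, the reduced trace in characteristic zero), for which nilpotent elements manifestly have zero trace. Once this is pinned down, the two ingredients above ($\tau$-stability of $J$ and vanishing of the trace on nilpotents) are essentially formal, and the contradiction is immediate. I expect no nontrivial computations, making the proof genuinely short.
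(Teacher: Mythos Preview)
Your argument is correct and is precisely the standard proof. The paper does not supply its own proof of this lemma at all: it is merely quoted from Voight \cite{Voi21} and used as a black box, so there is nothing to compare against beyond noting that your Jacobson-radical argument is exactly the one Voight gives.
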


\begin{remark}\label{invarinatdecom}
Let $\cA_k$ as in Lemma \ref{ssalgebra} and consider the decomposition $\cA_k=\prod\limits_i{\cA_i}$ into simple sub-algebra $\cA_i\subset \cA_k$. Then $\tau$ preserves this decomposition, \emph{i.e.} $\tau(\cA_i)=\cA_i$ for all $i$. Indeed if $\tau(\cA_i)=\cA_j$ for $i\not=j$, then $\cA_j$ is a simple factor and $\cA_i\cA_j=0$. Therefore $\text{Tr}(\cA_i\tau(\cA_i))=\text{Tr}(\cA_i\cA_j)=0$ which is a contradiction since $\tau$ is positive definite with respect to the trace.
\end{remark}

\begin{lemma}[Lemma 5.5.1 \cite{BL}]\label{salgebra}
For any simple $\bR$-algebra $\cA_{\bR}$ of finite dimension with a positive-definite involution there is an isomorphism of $\bR$-algebra from $(\cA_{\bR},\tau)$ to $(\textup{Mat}_{n}(\bF), \dagger)$ for some $n\in \bN$, where $\bF= \bR,\bC,\bH$ and $\dagger$ is the correspondent conjugate transpose on each field.
\end{lemma}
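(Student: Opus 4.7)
My plan is to combine Wedderburn's structure theorem, Frobenius' classification of real division algebras, and Skolem--Noether together with the positivity hypothesis to normalize the involution.

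\textbf{Step 1 (structure of the underlying algebra).} Since $\cA_{\bR}$ is finite dimensional and simple, Wedderburn's theorem produces an isomorphism of $\bR$-algebras $\cA_{\bR} \simeq \textup{Mat}_n(D)$ for some finite dimensional $\bR$-division algebra $D$ and some $n$. By Frobenius' theorem, the only candidates for $D$ are $\bR$, $\bC$, and $\bH$. Transporting $\tau$ along this isomorphism yields an involution $\tau'$ of $\textup{Mat}_n(\bF)$ which is still positive-definite for the reduced trace, so it suffices to conjugate $\tau'$ to the conjugate transpose $\dagger$ inside the $\bR$-algebra $\textup{Mat}_n(\bF)$.

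\textbf{Step 2 (from $\tau'$ to an inner twist of $\dagger$).} The composition $\dagger \circ \tau'$ is an $\bR$-algebra automorphism of $\textup{Mat}_n(\bF)$. I first analyse its action on the centre $Z$: for $\bF \in \{\bR,\bH\}$ one has $Z = \bR$ and there is nothing to check; for $\bF = \bC$, evaluating the positive-definite trace pairing at $X = i \cdot I$ excludes the possibility $\tau'|_Z = \mathrm{id}$, forcing $\tau'|_Z$ to be complex conjugation and hence $(\dagger \circ \tau')|_Z = \mathrm{id}$. By Skolem--Noether, $\dagger \circ \tau'$ is therefore inner, say $\dagger \circ \tau'(X) = UXU^{-1}$ for some $U \in \textup{GL}_n(\bF)$; applying $\dagger$ then brings $\tau'$ into the normal form $\tau'(X) = H^{-1} X^\dagger H$ with $H = U^\dagger$.

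\textbf{Step 3 (positivity, factorization, and conclusion).} Imposing $(\tau')^2 = \mathrm{id}$ forces $H^\dagger H^{-1}$ to lie in the centre and to satisfy a relation of the form $c \cdot \bar c = 1$. For $\bF \in \{\bR,\bH\}$ this gives $H^\dagger = \pm H$ directly, and for $\bF = \bC$ a rescaling of $H$ by a unit-modulus scalar reduces us to the same dichotomy. Evaluating the positive-definite form $X \mapsto \textup{Tr}(X\tau'(X))$ on rank one matrices $X = v w^\dagger$ excludes the skew branch and forces $H$ to be Hermitian positive definite over $\bF$. A Cholesky-type factorization, available over each $\bF \in \{\bR,\bC,\bH\}$ via the spectral theorem for Hermitian positive definite matrices, then produces $H = P P^\dagger$ with $P \in \textup{GL}_n(\bF)$; a direct calculation verifies that conjugation by $Q := P^{-\dagger}$ intertwines $\tau'$ with $\dagger$, and composing with the isomorphism of Step 1 delivers the desired isomorphism of $\bR$-algebras with involution.

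The main obstacle is the case $\bF = \bC$: Skolem--Noether produces inner automorphisms only up to automorphisms of the centre, and a priori $\tau'|_Z$ could be either $\mathrm{id}$ or complex conjugation; the positivity hypothesis must be invoked both to pin down how $\tau'$ acts on $Z = \bC$ in Step 2 and, at the very end of Step 3, to discard the skew and negative-definite branches for $H$.
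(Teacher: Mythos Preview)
The paper gives no proof of this lemma: it is quoted from \cite[Lemma 5.5.1]{BL} and invoked as a black box in the proof of Theorem~\ref{dec-Ginvarinatalgebra}, so there is no in-paper argument to compare your attempt against. Your argument is correct and is essentially the standard one (and close to what is in the cited source): Wedderburn plus Frobenius identifies the underlying algebra, the positivity check at $X=iI$ in the complex case forces $\tau'$ to restrict to conjugation on the centre so that Skolem--Noether applies, and then the involutive condition yields $\tau'(X)=H^{-1}X^\dagger H$ with $H^\dagger=\pm H$; positivity together with a square-root factorization of $H$ finishes.

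The only place I would tighten slightly is the exclusion of the skew and indefinite branches in Step~3. Over $\bR$ and $\bC$ your rank-one test works cleanly, since $\textup{Tr}\bigl((vw^\dagger)\tau'(vw^\dagger)\bigr)=(w^\dagger H^{-1}w)(v^\dagger H v)$ with both factors scalar; for $H$ skew over $\bR$ each factor vanishes, and for $H$ Hermitian indefinite one chooses $v,w$ to make the product negative. Over $\bH$, however, the quantities $w^\dagger H^{-1}w$ and $v^\dagger H v$ lie in $\bH$ and need not commute, so the product formula and its sign are less transparent; there one usually argues instead by diagonalizing the Hermitian $H$ (which still has real eigenvalues over $\bH$) and testing on elementary matrices, or by passing to the $2n\times 2n$ complex model of $\textup{Mat}_n(\bH)$. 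This is a minor bookkeeping point, not a gap in the strategy.
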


Let us return to our situation. Given $X$ abelian variety we have the finite dimensional $\bQ$-algebra $\text{End}_{\bQ}(X) $ with the Rosati involution $'$. The Rosati involution $'$ depends on the choice of the ample line bundle $L$. Since we are considering $X$ with an action of a finite group $G$,  we can choose $L$ to be $G$-invariant.\footnote{If $L$ is not invariant, we can consider $\sum\limits_{g\in G} g^\ast L$ which defines a $G$-invariant ample line bundle since $L\not=0$ and ample.}
Hence we have $\phi_{L}=\phi_{g^\ast L}\overset{\eqref{dual2}}{=}\hat{g}\phi_L g$ for every $g\in G$. This leads to the following relation:
\begin{equation} \label{g'isg1}
  \forall g\in G \qquad g'\overset{\eqref{rosati}}{=}\phi_L^{-1} \hat{g}\phi_L=\phi_L^{-1} \hat{g}\phi_L g g^{-1} = g^{-1}.    
\end{equation}

\begin{remark}
We remark that $\forall f,g\in \text{End}_{\bQ}(X) $ it holds 
\begin{equation} \label{rosaticontravariante}
 f'g'=(gf)', 
\end{equation}
see \cite[Section 5.1]{BL}
\end{remark}

\noindent We are in position to prove Theorem \ref{dec-Ginvarinatalgebra}.
\begin{proof}[proof of Theorem \ref{dec-Ginvarinatalgebra}]
\begin{romanenumerate}
\item 
We first prove that $\textup{End}_{\bQ}(X)^G$ is a well-defined sub-algebra of the finite dimensional $\bQ$-algebra $\textup{End}_{\bQ}(X)$, \emph{i.e.} the algebra operations of $\textup{End}_{\bQ}(X)$ are $G$-equivariant.
In the following $x,y\in \text{End}_{\bQ}(X)$, $\lambda\in \bQ$ and $g\in G$:
\begin{itemize} 
\item[1.] $\alpha(g)(x+y)=g'(x+y)g=g' x g+g'yg=\alpha(g)(x)+\alpha(g)(y)$, 
\item[2.] $\alpha(g)(\lambda x)=g'\lambda xg= \lambda \alpha(g)(x)$,
\item[3.] $\alpha(g)(xy)=g' xy g =g'xgg'yg=\alpha(g)(x)\alpha(g)(y)$ we use $g'g=id$ by \eqref{g'isg1}.
\end{itemize}
Clearly, the multiplicative and additive identity are in $\text{End}_{\bQ}(X)^G $ as well as the multiplicative and additive inverse. Thus, $\text{End}_{\bQ}(X)^G$ is a finite-dimensional $\bQ$-algebra.
We prove that the Rosati involution $'$ on $\textup{End}_{\bQ}(X) $ is $G$-equivariant, \emph{i.e.} $\iota$ defines an involution on $\text{End}_{\bQ}(X)^G$. For every $g\in G$, the following diagram is commutative:
\begin{center}
    \begin{tikzcd}
      \textup{End}_{\bQ}(X) \arrow[r," ' "] \arrow[d, "\alpha(g)" '] & \textup{End}_{\bQ}(X) \arrow[d,"\alpha(g)"]  \\
      \textup{End}_{\bQ}(X) \arrow[r, " ' " '] & \textup{End}_{\bQ}(X).
    \end{tikzcd}
\end{center}
Indeed for every $\varphi \in \text{End}_{\bQ}(X)$ it holds 
$$ \alpha(g)(\varphi')= g'\varphi'g \overset{\eqref{rosaticontravariante}}{=}(g'\varphi g)'=\bigl(\alpha(g)\varphi\bigr)'. $$ 
Therefore $\iota$ is well-defined on $\text{End}_{\bQ}(X)^G $. In particular, it is still an involution and positive-definite with respect to trace.

\item By (i) $\textup{End}_{\bQ}(X)^{G}\otimes \bR$ is a finite dimensional $\bR$-algebra with positive involution, hence by Lemma \ref{ssalgebra} it is a semisimple $\bR$-algebra, i.e,
$$\textup{End}_{\bQ}(X)^{G}\otimes \bR= \prod \limits_i \cA_i  \quad \text{with $\cA_i$'s simple $\bR$-algebras of finite dimension.}$$
Moreover, by Remark \ref{invarinatdecom} we have that $\iota$ preserves each simple factor $\cA_i$ of the above decomposition, hence $\iota_{|\cA_i}$ defines an involution on $\cA_i$ which is positive-definite for all $i$. By applying at each factor the classification of finite-dimensional simple $\bR$-algebra with a positive-definite involution, see Lemma \ref{salgebra}, we obtain the following isomorphism of $\bR$ algebras: 
\begin{center}
    \begin{tikzcd}
        \Psi\colon (\textup{End}_{\bQ}(X)^{G}\otimes \bR, \iota) \arrow[r, "\simeq"] & (\prod\limits_i \text{Mat}_{l_i}(\bR) \times \prod\limits_j \text{Mat}_{m_i}(\bC) \times \prod\limits_k \text{Mat}_{n_k}(\bH), \dagger)
    \end{tikzcd}
    \end{center}
\end{romanenumerate}
\end{proof}

\begin{remark}
Using the definition of the action of $G$ on $\text{End}_{\bQ}(X)$ (Theorem \ref{alpha}), we have that $$\text{End}_{\bR}(X)^G=\textup{End}_{\bQ}(X)^{G}\otimes \bR. $$
\end{remark}

\subsection{The $G$-invariant ample cone of $X$}
In this sub-section we prove that the $G$-invariant ample cone $\text{Amp}(X)^G$ is a homogeneous self-dual cone.

\begin{definition}
We define the $\bR$-vector space $N^1(X)^G:=\{ D\in N^1(X) \colon \text{ $G$-invariant}\}$.
We define the $G$-invariant ample cone $\text{Amp}(X)^G:=\{ D\in \text{Amp}(X) \colon \text{ $G$-invariant}\}$.
\end{definition}

\begin{theorem}\label{Ginvariantcone}
Let $X$ be abelian variety and $G\le \textup{Aut}(X)$ be a finite group. Then the $G$-invariant ample cone is isomorphic to :
$$ \textup{Amp}(X)^G \xrightarrow{\quad \simeq \quad} \bigoplus\limits_i \cP_{l_i}(\bR)  \oplus  \bigoplus\limits_j \cP_{m_i}(\bC)  \oplus  \bigoplus\limits_k \cP_{n_k}(\bH)\subseteq \Psi(\textup{End}_{\bR}(X)^G)$$
where $\cP_{l}(\bF)$ is the cone of positive-definite hermitian matrices of dimension $l$ over the field $\bF$ and $\Psi$ is defined in Theorem \ref{dec-Ginvarinatalgebra}.
In particular, it is a homogeneous self-dual cone.
\end{theorem}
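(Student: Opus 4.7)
The plan is to transport $\text{Amp}(X)^G$ through the embedding $f$ of Theorem \ref{thm2.3} composed with the algebra isomorphism $\Psi$ of Theorem \ref{dec-Ginvarinatalgebra}, and then recognize the resulting cone as a direct sum of cones of positive-definite Hermitian matrices. First I would upgrade the isomorphism $N^1(X)\simeq \text{End}^s_{\bR}(X)$ to a $G$-equivariant one. By Theorem \ref{alpha}, the $G$-action on $\text{End}_{\bR}(X)$ via $\alpha$ extends the pullback action on $N^1(X)$, so $f$ restricts to an isomorphism
$$N^1(X)^G \simeq \bigl(\text{End}^s_{\bR}(X)\bigr)^G = \text{End}_{\bR}(X)^G \cap \text{End}^s_{\bR}(X).$$
Since the involution $\iota$ on $\text{End}_{\bR}(X)^G$ is the restriction of the Rosati involution (by Theorem \ref{dec-Ginvarinatalgebra}(i)), this intersection is exactly the $\iota$-symmetric part of $\text{End}_{\bR}(X)^G$. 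Applying $\Psi$ and using that $\Psi$ sends $\iota$ to the conjugate transpose $\dagger$ on each factor, I obtain
$$N^1(X)^G \simeq \bigoplus_i \cH_{l_i}(\bR) \oplus \bigoplus_j \cH_{m_j}(\bC) \oplus \bigoplus_k \cH_{n_k}(\bH).$$

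Next I would identify the ample cone inside this picture. The classical polarization description for abelian varieties (see \cite[Section 5.2]{BL} and \cite[Corollary 3.6]{Pr12}) states that $D\in N^1(X)$ is ample if and only if $f(D)$ is totally positive Rosati-symmetric, equivalently positive-definite in the sense of the symmetric trace form on $\text{End}_{\bR}(X)$, i.e.\ $\text{Tr}(f(D)\psi\psi')>0$ for every nonzero $\psi$. This criterion is intrinsic and therefore survives restriction to the $\iota$-invariant subalgebra $\text{End}_{\bR}(X)^G$: an invariant class $D$ lies in $\text{Amp}(X)^G$ precisely when its image $\Psi(f(D))$ is positive-definite Hermitian in each simple factor of $\text{End}_{\bR}(X)^G$. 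This yields
$$\text{Amp}(X)^G \simeq \bigoplus_i \cP_{l_i}(\bR) \oplus \bigoplus_j \cP_{m_j}(\bC) \oplus \bigoplus_k \cP_{n_k}(\bH),$$
giving the desired explicit description. To finish, I invoke Theorem \ref{thm1.2}: each summand $\cP_r(\bF)$ is an indecomposable homogeneous self-dual cone, and a direct sum of homogeneous self-dual cones is again homogeneous self-dual, with the sum of the inner products $\langle x,y\rangle=\text{Tr}(xy^\ast)$ realizing self-duality and the product of factor-wise transitive automorphism groups acting transitively on the sum.

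The main obstacle I expect is that the simple-factor decomposition of $\text{End}_{\bR}(X)^G$ given by Theorem \ref{dec-Ginvarinatalgebra} is \emph{not} in general a sub-decomposition of the decomposition of $\text{End}_{\bR}(X)$ from Theorem \ref{dec-endo-algebra}, because the $G$-action may permute or amalgamate the original simple factors. One therefore cannot naively extract the $G$-invariant ample cone factor by factor from the ambient one; instead, the characterization of ampleness must be phrased intrinsically via the trace form so that it restricts correctly to $\text{End}_{\bR}(X)^G$ and is read off faithfully under $\Psi$ as positive-definiteness in each of the \emph{new} Hermitian factors.
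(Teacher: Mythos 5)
Your proposal is correct and follows essentially the same route as the paper: transport $N^1(X)^G$ via the $G$-equivariant version of the embedding $f$ of Theorem \ref{thm2.3} into the $\iota$-symmetric part of $\textup{End}_{\bR}(X)^G$, apply $\Psi$ to land in the Hermitian factors, identify ample classes with (totally) positive elements as in \cite[Remark 5.2.5]{BL}, and conclude via Theorem \ref{thm1.2}. The one point where you go beyond the paper is in justifying why total positivity in $\textup{End}_{\bR}(X)$ is read off as positive-definiteness in each \emph{new} simple factor of $\textup{End}_{\bR}(X)^G$ (which need not refine the decomposition of Theorem \ref{dec-endo-algebra}); your intrinsic trace-form characterization handles this correctly, whereas the paper passes over it silently.
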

\begin{proof}

By Theorem \ref{thm2.3}, we have the following isomorphism of $\bR$-vector spaces:

\begin{center} \begin{tikzcd}
f \colon N^1(X) \arrow[r, "\simeq"] & \textup{End}^s_{\bR}(X) \\
 D \arrow[r, mapsto] & \phi_L^{-1} \phi_D,
\end{tikzcd}
\end{center}
where $\textup{End}^s_{\bR}(X)$ denoted the space of $\bR$-endomorphisms on $X$ fixed by the Rosati involution.
In Theorem \ref{alpha}, we prove that the action $\alpha\colon \textup{End}_{\bR}(X)^\times\longrightarrow \textup{End}_{\bR}(X)$ extends the action of $\textup{Aut}(X)$ on $N^1(X)$ by pullbacks, \emph{i.e.} for every $g \in G$ we have the following commutative diagram:
\begin{center}
    
\begin{tikzcd}
N^{1}(X) \arrow[rr, "f"] \arrow[d, "g^{\ast}" '] &  & \textup{End}^s_{\bR}(X) \arrow[d, "\alpha(g)"] \\
N^{1}(X) \arrow[rr, "f" ']  &  & \textup{End}^s_{\bR}(X)                       
\end{tikzcd}
\end{center}

Therefore:
\begin{equation}\label{ginvarinatNS}
\begin{split}
f(N^1(X)^{G}) &= (\textup{End}^s_{\bR}(X))^{G}\\
&=\{ x\in \text{End}_{\bR}(X) \mid  x'=x \text{ and $\alpha(g)(x)=x$ for every $g\in G$}\}\\
&=\{ x\in \text{End}_{\bR} (X)^{G} \mid \iota(x)=x\} .
\end{split}   
\end{equation}

By Theorem \ref{dec-Ginvarinatalgebra} we have the following isomorphism of $\bR$-algebras 
\begin{center}
    \begin{tikzcd}
        \Psi\colon (\text{End}_{\bR}(X)^{G}, \iota) \arrow[r, "\simeq"] & (\prod\limits_i \text{Mat}_{l_i}(\bR) \times \prod\limits_j \text{Mat}_{m_i}(\bC) \times \prod\limits_k \text{Mat}_{n_k}(\bH), \dagger)
    \end{tikzcd}
\end{center}
which combines with equation \eqref{ginvarinatNS} yields to the following isomorphism of vector spaces
\begin{center}
    \begin{tikzcd}
      (\Psi\circ f) \colon N^1(X)^G \arrow[r, "\simeq " ] & \bigoplus\limits_i \cH_{l_i}(\bR) \oplus \bigoplus\limits_j \cH_{m_i}(\bC)  \oplus  \bigoplus\limits_k \cH_{n_k}(\bH) \subset \Psi(\textup{End}_{\bR}(X)^G)
    \end{tikzcd}
\end{center}
where we use additive notation for $N^1(X)^G$ to emphasise that it need not be a sub-algebra of $\text{End}_{\bR}(X)^G$.
By \cite[Remark 5.2.5]{BL} we know that the embedding $f$  establishes a bijection between ample line bundles on $X$ and totally positive endomorphisms in $\text{End}^s_{\bQ}(X)$, \emph{i.e.} endomorphisms in $\text{End}^s_{\bQ}(X)$ such that all the zeros of their characteristic polynomial are positive. Therefore, we obtain:
\begin{center}
    \begin{tikzcd}
      (\Psi\circ f) \colon \text{Amp}(X)^G \arrow[r, "\simeq " ] & \bigoplus\limits_i \cP_{l_i}(\bR) \oplus \bigoplus\limits_j \cP_{m_i}(\bC) \oplus \bigoplus\limits_k \cP_{n_k}(\bH) \subset \Psi(\textup{End}_{\bR}(X)^G).  
    \end{tikzcd}
\end{center}

By Theorem \ref{thm1.2} we know that a cone $C=\bigoplus\limits_{i}C_i$ is homogeneous self-dual if and only if each indecomposable cone $C_i$'s is too. 
By the classification of the homogeneous self-dual indecomposable cones, see Theorem \ref{thm1.2}, we know that each factor $\cP_{l}(\bF)$ of $\text{Amp}(X)^G$ is a homogeneous self-dual cone, hence we have that $\text{Amp}(X)^G$ is a homogeneous self-dual cone.

\end{proof}

\subsection{Action of the centralizer}
We prove that the centralizer $\textup{C}_{\textup{Aut}(X)}(G)$ defines an arithmetic subgroup in $\text{Aut}(\text{Amp}(X)^G)^0$.

\begin{lemma}\label{end-arithm}
Let $X$ be an abelian variety and $G\le \textup{Aut}(X)$ be finite group. The group of units $\bigl(\textup{End}_{\bR}(X)^G\bigl)^\times$ is an affine algebraic group defined over $\bQ$ and $\textup{C}_{\textup{End}(X)^\times}(G)$ is an arithmetic subgroup.
\end{lemma}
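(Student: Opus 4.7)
The plan is to realize $\bigl(\textup{End}_\bR(X)^G\bigr)^\times$ as the real points of a linear algebraic group defined over $\bQ$ via the left regular representation of the finite-dimensional $\bQ$-algebra $\textup{End}_\bQ(X)^G$, and then to exhibit $\textup{C}_{\textup{End}(X)^\times}(G)$ as commensurable with its natural integral points. Set $A_\bQ := \textup{End}_\bQ(X)^G$, which is a finite-dimensional $\bQ$-algebra by Theorem \ref{dec-Ginvarinatalgebra}(i), say of $\bQ$-dimension $n$, so that $A_\bR := A_\bQ \otimes_\bQ \bR = \textup{End}_\bR(X)^G$. The faithful left regular representation $\rho : A_\bR \hookrightarrow \textup{End}_\bR(A_\bR) \cong \textup{Mat}_n(\bR)$ identifies $A_\bR$ with a $\bQ$-linear subspace of $\textup{Mat}_n$ (defined by the structure constants of $A_\bQ$), and $\rho\bigl(A_\bR^\times\bigr) = \rho(A_\bR) \cap \textup{GL}_n(\bR)$ is the open complement of the hypersurface $\{\det = 0\}$. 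Complexifying, we obtain an affine algebraic group $\mathcal{G} \subset \textup{GL}_n(\bC)$ defined over $\bQ$ whose $\bR$-points are $A_\bR^\times$; this proves the first assertion.

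For the integral structure, I would set $R := \textup{End}(X) \cap A_\bR$. Since $\textup{End}(X)$ is a $\bZ$-lattice in $\textup{End}_\bQ(X)$ and $A_\bR$ is the intersection of the $\bQ$-linear hyperplanes $g' x g = x$ for $g \in \textup{Lin}(G)$, $R$ is a $\bZ$-order in $A_\bQ$, free of rank $n$ over $\bZ$. Taking a $\bZ$-basis of $R$ as the reference basis for $\rho$, one has $\mathcal{G}(\bZ) = \mathcal{G} \cap \textup{GL}_n(\bZ) = R^\times$: the condition that both $\rho(\varphi)$ and its inverse preserve $R$ is, upon evaluation at $1 \in R$, equivalent to $\varphi, \varphi^{-1} \in R$.

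It remains to relate $R^\times$ to $\textup{C}_{\textup{End}(X)^\times}(G)$. By equation \eqref{g'isg1}, the $G$-invariance of the polarization $L$ forces $g' = g^{-1}$ for every $g \in \textup{Lin}(G)$, so $A_\bR$ is precisely the centralizer of $\textup{Lin}(G)$ inside $\textup{End}_\bR(X)$ and hence $R^\times = \textup{C}_{\textup{End}(X)^\times}(\textup{Lin}(G))$. Writing each $g \in G$ as $t_{x_g} \circ \textup{Lin}(g)$, an element $\varphi \in R^\times$ additionally commutes with $g$ inside $\textup{Aut}(X)$ iff $\varphi$ fixes the torsion point $x_g$. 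Letting $N$ be a common multiple of the (finitely many) orders of these $x_g$, the principal congruence subgroup $\Gamma(N) := \ker\bigl(R^\times \to \textup{Aut}(X[N])\bigr)$ has finite index in $R^\times$ and lies inside $\textup{C}_{\textup{End}(X)^\times}(G)$, which therefore sits between two commensurable lattices and is itself commensurable with $\mathcal{G}(\bZ)$. Arithmeticity in the sense of Definition \ref{defari} then follows.

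The main obstacle is not really conceptual: once Theorem \ref{dec-Ginvarinatalgebra} is invoked, the algebraic-group assertion and the identification $\mathcal{G}(\bZ) = R^\times$ are formal consequences of the theory of $\bZ$-orders in finite-dimensional $\bQ$-algebras. The only delicate point is the last step, verifying that the potential discrepancy between the centralizer of $G$ in $\textup{Aut}(X)$ and the centralizer of $\textup{Lin}(G)$ in $\textup{End}(X)^\times$ is captured by a finite-index congruence condition on $X[N]$, so that arithmeticity is preserved.
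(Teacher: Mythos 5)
Your first two paragraphs are correct and follow essentially the same route as the paper: the paper realizes $\bigl(\textup{End}_{\bR}(X)^G\bigr)^\times$ as a $\bQ$-defined affine group via the closed embedding $x\mapsto(x,x^{-1})$ into $\cA_{\bR}\times\cA_{\bR}$ rather than via the determinant on the regular representation, and it obtains the integral structure from the rational representation $\textup{End}(X)\hookrightarrow\textup{End}_{\bZ}(\Lambda)\simeq\textup{Mat}_{2n}(\bZ)$ rather than from the order $R=\textup{End}(X)^G$ acting on itself; both variants are standard and both yield $\mathcal{G}(\bZ)\simeq\bigl(\textup{End}(X)^G\bigr)^\times$. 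Keep in mind that the paper reads $\textup{C}_{\textup{End}(X)^\times}(G)$ precisely as this unit group, i.e.\ as the centralizer of $\textup{Lin}(G)$ (this is the convention fixed just before Definition \ref{defDG} and made explicit, via \eqref{g'isg1}, in the displayed identity in the paper's proof), so for the statement as it is used later (Remark \ref{rho_alpha}, Proposition \ref{cent_arith}, where translations act trivially on $N^1(X)$ anyway) your argument is already complete after your second paragraph.

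Your third paragraph, which tries to pass from the centralizer of $\textup{Lin}(G)$ to the literal centralizer of $G$ in $\textup{Aut}(X)$, contains a genuine error: the translation parts $x_g=g(0)$ need not be torsion points. The relation $g^n=\textup{id}$ only forces $\sum_{i=0}^{n-1}\textup{Lin}(g)^i(x_g)=0$, i.e.\ $x_g$ lies in the kernel of a norm endomorphism, which is in general a positive-dimensional subgroup; for instance $(x,y)\mapsto(-x+a,\,y+\tau)$ on $E\times E$, with $\tau\in E[2]\setminus\{0\}$ and $a$ arbitrary, is a free involution whose translation part $(a,\tau)$ has infinite order for generic $a$. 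Consequently the integer $N$ in your construction need not exist and the congruence subgroup $\Gamma(N)$ is not available, so the sandwich argument does not go through as written. (One can repair it by first conjugating $G$ by a translation so that all $x_g$ become $|G|$-torsion — the class of the cocycle $g\mapsto x_g$ in $H^1(G,X)$ is killed by $|G|$ and $X$ is divisible — but conjugation changes the literal centralizer, which is exactly why the paper works with $\textup{Lin}(G)$ throughout.) Since this last step addresses a reading of the statement that the paper neither adopts nor needs, the rest of your proof stands, but as written the torsion claim must either be justified or the final paragraph dropped in favor of the paper's convention.
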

\begin{proof}
By Theorem \ref{dec-Ginvarinatalgebra} part (i) $\text{End}_\bQ(X)^G$, as finite dimensional $\bQ$-algebra, is a finite dimensional $\bQ$-vector space and, since $\text{End}_\bR(X)^G=\text{End}_\bQ(X)^G\otimes\bR$, it defines a $\bQ$-structure on  $\text{End}_\bR(X)^G$. We set the following isomorphism as affine spaces:
 $$\text{End}_{\bQ}(X)^G \simeq \bQ^d \qquad \text{End}_{\bR}(X)^G \simeq \bQ^d\otimes \bR\simeq \bR^d. $$ 
We denote by $\cA_{\bQ}$ the $d$-dimensional $\bQ$-algebra $\text{End}_{\bQ}(X)^G$ and by $\cA_{\bR}$ its extension over $\bR$, so $\text{End}_{\bR}(X)^G$.
We consider the following injective map:
\begin{center}
    \begin{tikzcd}
     j\colon (\cA_{\bR})^\times \arrow[r,hookrightarrow] & \cA_{\bR}\times \cA_{\bR} \\
     x \arrow[r,mapsto] & (x,x^{-1}) .
    \end{tikzcd}
\end{center}
This map yields to the following description of $(\cA_{\bR})^\times$ as Zariski closed in the affine space $\cA_{\bR}\times \cA_{\bR}\simeq\bR^{2d}$:
\begin{center}
$(\cA_{\bR})^\times\overset{j}{\simeq} \{(x,y)\in \cA_{\bR}\times \cA_{\bR} \mid xy-1=0\}=V(xy-1) \subset \cA_{\bR}\times \cA_{\bR}\simeq\bR^{2d}. $ 
\end{center}
Therefore, $(\cA_{\bR})^\times$ is an affine algebraic subgroup $\cG$ of the affine space $\bR^{2d}$. Since the equation defining $(\cA_{\bR})^\times$ is in fact over $\bQ$, we have that $(\cA_{\bR})^\times$ is an affine variety defined over $\bQ$. Moreover, we observe that the group of $\bQ$-points of $(\cA_{\bR})^\times$ is $(\cA_{\bQ})^\times$.\\
We now prove the arithmetic part. First we recall that by \eqref{g'isg1} we have $g'=g^{-1}$ for all $g\in G$, hence:
$$\textup{C}_{\textup{End}(X)^\times }(G)= 
\{ \varphi \in \textup{End}(X)^\times \mid g'\varphi g= \varphi\text{ for every } g\in \text{Lin}(G) \}=\bigl(\text{End}(X)^G\bigr)^\times. $$
Moreover, given an abelian variety $X \simeq \bC^{n}/ \Lambda$ there is the following faithful representation (called \emph{rational representation}): 
\begin{center}
    \begin{tikzcd}
        \rho \colon \text{End}(X) \arrow[r,hookrightarrow] & \text{End}_{\bZ}(\Lambda)\simeq \text{Mat}_{2n}(\bZ) \\
        \varphi \arrow[r,mapsto] & \widetilde{\varphi}
    \end{tikzcd}
\end{center}
where $ \widetilde{\varphi}$ is the unique $\bC$-linear map such that $\widetilde{\varphi}(\Lambda)\subseteq \Lambda$ inducing $\varphi$.
We can extend $\rho$ $\bR$-linearly obtaining the following groups monomorphism
$$\rho_{\bR} \colon \text{End}_{\bR}(X) \xhookrightarrow{\qquad \qquad} \text{Mat}_{2n}(\bR). $$
Moreover, we restrict $\rho_{\bR}$ to the group of units of $G$-invariant $\bR$-endomorphisms, yielding the following embedding:
\begin{center}
    \begin{tikzcd}
        \rho_{\bR} \colon \bigl(\text{End}_{\bR}(X)^G\bigr)^\times \arrow[r,hookrightarrow] & \text{GL}_{\bR}(\Lambda\otimes \bR)\simeq \text{GL}_{2n}(\bR)
    \end{tikzcd}
\end{center}
which, in particular, is a morphism of $\bQ$-algebraic groups, \emph{i.e.} it is a closed embedding. 
Since $
\rho_{\bR}\biggl(\bigl(\text{End}(X)^G\bigr)^\times\biggr) \subset \text{GL}_{2n}(\bZ)$, we obtain:
$$\bigl(\text{End}(X)^G\bigr)^\times\simeq \text{Im}(\rho_{\bR})\cap \text{GL}_{2n}(\bZ)=\rho_r\biggl(\bigl(\text{End}_{\bR}(X)^G\bigr)^\times\biggr)\cap \text{GL}_{2n}(\bZ).$$
Therefore, denoting the algebraic group $\cG= \rho_{\bR}\biggl(\bigl(\text{End}_{\bR}(X)^G\bigr)^\times\biggr)$ we have$$\cG(\bZ)\overset{\rho_{\bR}}{\simeq}\bigl(\text{End}(X)^G\bigr)^\times, $$
which by Definition \ref{defari} proves that $\textup{C}_{\textup{End}(X)^\times }(G)=\bigl(\text{End}(X)^G\bigr)^\times$ is an arithmetic group in $\bigl(\text{End}_{\bR}(X)^G\bigr)^\times$.

\end{proof}

The following lemma tell us that when we look at the pull-back action of $\text{Aut}(X)$ on $N^1(X)$ we can forget about the translation.

\begin{lemma}\label{end=aut}
Let $X$ be an abelian variety and let us consider the following homomorphism: 
\begin{center}
\begin{tikzcd} \alpha \colon \textup{Aut}(X) \arrow[r] & \textup{GL}(N^{1}(X)) \\
    \varphi  \arrow[r,mapsto] & (\varphi^{*} \colon D \xmapsto{\qquad} \varphi^{*}D).
\end{tikzcd}
\end{center}
Then, $\alpha(\textup{Aut}(X))=\alpha(\textup{End}(X)^{\times})$. In other words, every translation of $X$ acts as the identity on $N^1(X)$.
\end{lemma}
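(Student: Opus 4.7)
The plan is to exploit the fact, recalled in the paper just after the definition of automorphisms of abelian varieties, that any $\varphi\in\textup{Aut}(X)$ decomposes as $\varphi=t_x\circ\psi$ with $\psi\in\textup{End}(X)^{\times}$ an invertible endomorphism and $t_x$ a translation. Once this is in hand, the inclusion $\alpha(\textup{End}(X)^{\times})\subseteq\alpha(\textup{Aut}(X))$ is immediate, and the whole content of the lemma reduces to checking that translations act trivially on $N^{1}(X)$.

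First I would recall the map $\phi_D\colon X\to\hat X$ introduced in \eqref{mapondual}, which sends $x\mapsto t_x^{\ast}D\otimes D^{-1}$. By construction $\phi_D(x)\in\textup{Pic}^{0}(X)$, so for every $x\in X$ and every $D\in\textup{Pic}(X)$ one has $t_x^{\ast}D\otimes D^{-1}\in\textup{Pic}^{0}(X)=\ker(c_1)$. In particular $t_x^{\ast}D$ and $D$ are algebraically equivalent, hence numerically equivalent, so $t_x^{\ast}[D]_{\textrm{num}}=[D]_{\textrm{num}}$ in $N^{1}(X)$. Thus $\alpha(t_x)=\textup{id}_{N^{1}(X)}$ for every translation $t_x$.

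Combining these two steps, for an arbitrary $\varphi=t_x\circ\psi\in\textup{Aut}(X)$ we get
\[
\alpha(\varphi)=\alpha(t_x\circ\psi)=\psi^{\ast}\circ t_x^{\ast}=\psi^{\ast}=\alpha(\psi),
\]
which shows $\alpha(\textup{Aut}(X))\subseteq\alpha(\textup{End}(X)^{\times})$. The reverse inclusion is clear because any invertible endomorphism is in particular a biholomorphism, so $\textup{End}(X)^{\times}\subseteq\textup{Aut}(X)$.

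There is no real obstacle here: the only nontrivial ingredient is the fact that $t_x^{\ast}D\equiv D$ numerically, and this is built into the very definition of $\phi_D$ recalled earlier in the paper. The rest is a formal consequence of the decomposition $\textup{Aut}(X)=\textup{End}(X)^{\times}\cdot\{\text{translations}\}$.
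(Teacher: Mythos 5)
Your argument is correct and follows essentially the same route as the paper: both reduce to showing translations act trivially on $N^1(X)$ via the observation that $t_x^\ast D\otimes D^{-1}$ lies in $\textup{Pic}^0(X)$, hence is algebraically and therefore numerically trivial. The only cosmetic difference is that you invoke this through the definition of $\phi_D$ while the paper states it directly, so there is nothing to add.
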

\begin{proof}
We prove that for every translation $t\in \text{Aut}(X)$ on $X$ then $\alpha(t)=t^\ast=id_{N^1(X)}$. 
Let us denote by $[D]_{num}\in N^1(X)=(\text{Div}(X)/\equiv)\otimes \bR$ the numerical class of $D$. We recall that on $N^1(X)$ the equivalence coincides with the algebraic equivalence, see \cite[Remark 1.1.21]{Laz03}, therefore $[D]_{num}=[D]_{alg}$ in $N^1(X)$.
Let $D$ be a Cartier divisor, we have $\cO_X(D-t^\ast D) \in \textup{Pic}^{0}(X)$. Therefore, since $\text{Div}(X)/\sim_{alg}\simeq \text{Pic}(X)/\text{Pic}^0(X)$ we obtain $[D-t^\ast D]_{alg}=[D-t^\ast D]_{num}=0$. Thus $t^\ast=id_{N^1(X)}$
and $\alpha(\textup{Aut}(X))=\alpha(\textup{End}(X)^{\times})$.
\end{proof}

\begin{lemma} \label{alphaG}
Let $X$ be an abelian variety and $G\le \textup{Aut}(X)$ finite group.
The following $\bQ$-morphism of algebraic groups
\begin{center}\begin{tikzcd} 
\rho \colon \bigl(\textup{End}_{\bR}(X)^{G}\bigr)^{\times} \arrow[r] & \textup{Aut}(\textup{Amp}(X)^{G})^{0} \subset \textup{GL}(N^1(X)^G)\\  \varphi \arrow[r,mapsto ] & \rho(\varphi)\colon x \mapsto \iota(\varphi)\circ x \circ \varphi \end{tikzcd} \end{center}
is surjective.
\end{lemma}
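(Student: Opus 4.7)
The plan is to diagonalize $\rho$ via the isomorphisms $\Psi$ of Theorem \ref{dec-Ginvarinatalgebra} and $\Psi\circ f$ of Theorem \ref{Ginvariantcone}, which together convert $\rho$ into a component-wise standard congruence action of matrices on spaces of Hermitian matrices. Once this transport is done, surjectivity will follow directly from Vinberg's description (Theorem \ref{groupofcone}) of the identity component of the automorphism group of each indecomposable homogeneous self-dual cone.

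More concretely, the first step is to record that $\Psi$ identifies $\bigl(\textup{End}_{\bR}(X)^G\bigr)^\times$ with $\prod_i \textup{GL}_{l_i}(\bR)\times\prod_j \textup{GL}_{m_j}(\bC)\times\prod_k \textup{GL}_{n_k}(\bH)$, sending $\iota$ to the conjugate transpose $\dagger$, while $\Psi\circ f$ identifies $N^1(X)^G$ with $\bigoplus_i\cH_{l_i}(\bR)\oplus\bigoplus_j\cH_{m_j}(\bC)\oplus\bigoplus_k\cH_{n_k}(\bH)$ and sends $\textup{Amp}(X)^G$ to the sub-cone $\bigoplus_i\cP_{l_i}(\bR)\oplus\bigoplus_j\cP_{m_j}(\bC)\oplus\bigoplus_k\cP_{n_k}(\bH)$. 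Under these identifications an element $\varphi\leftrightarrow(M_i,N_j,P_k)$ acts on $x\leftrightarrow(A_i,B_j,C_k)$ by
\[
\rho(\varphi)(x)=(M_i^\dagger A_i M_i,\ N_j^\dagger B_j N_j,\ P_k^\dagger C_k P_k),
\]
so $\rho$ respects the splitting into indecomposable summands and restricts on each factor to the classical congruence action $A\mapsto M^\dagger A M$ on $\cH_r(\bF)$, clearly preserving $\cP_r(\bF)$.

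The second step is to identify the target. By Theorem \ref{thm1.2} the product $\prod_i\textup{Aut}(\cP_{l_i}(\bR))\times\prod_j\textup{Aut}(\cP_{m_j}(\bC))\times\prod_k\textup{Aut}(\cP_{n_k}(\bH))$ has finite index in $\textup{Aut}(\textup{Amp}(X)^G)$, hence passing to identity components the inclusion becomes an equality and $\textup{Aut}(\textup{Amp}(X)^G)^0$ is the direct product of the identity components of the three families of automorphism groups. By Theorem \ref{groupofcone}, the identity component on an $\bF$-factor of size $r$ is exactly $\{A\mapsto M^\dagger AM\mid M\in\textup{GL}_r(\bF)\}$, which by the preceding paragraph is precisely the image of the corresponding factor of $\bigl(\textup{End}_{\bR}(X)^G\bigr)^\times$ under $\rho$. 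Combining the factors yields surjectivity of $\rho$.

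The only routine verification needed is that $\rho$ maps into $\textup{GL}(N^1(X)^G)$ at all: the symmetry $\iota(\rho(\varphi)(x))=\rho(\varphi)(x)$ follows from $x'=x$ together with the contravariance \eqref{rosaticontravariante}, while $G$-invariance of $\rho(\varphi)(x)$ uses that $\varphi\in\textup{End}_{\bR}(X)^G$ combined with $g'=g^{-1}$ from \eqref{g'isg1} forces $\varphi$ to commute with each $g\in G$. The only conceptual point, and hence the main obstacle, is matching the explicit matrix form of $\rho$ with Vinberg's characterisation of $\textup{Aut}(\cP_r(\bF))^0$; once the structural isomorphisms of Sections \ref{sec.convexgeo} and the previous subsections are in place, no further input is required.
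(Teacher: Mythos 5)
Your proposal is correct and follows essentially the same route as the paper's proof: transport $\rho$ through $\Psi$ (and $\Psi\circ f$) to the congruence action $A\mapsto M^{\dagger}AM$ on each matrix factor, identify $\textup{Aut}(\textup{Amp}(X)^G)^0$ with the product of the $\textup{Aut}(\cP_r(\bF))^0$ via Theorem \ref{thm1.2}, and conclude surjectivity factor by factor from Vinberg's Theorem \ref{groupofcone}. The only difference is cosmetic — the paper treats a single factor first and then invokes the product decomposition, while you handle all factors at once — and your added check that $\rho$ lands in $\textup{GL}(N^1(X)^G)$ is a correct (if implicit in the paper) verification.
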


\begin{proof}
It is clear that is a morphism of algebraic groups.
Moreover it is also well-defined as morphism of $\bQ$-varieties $\bigl(\text{End}_{\bQ}(X)^G\bigr)^\times\longrightarrow \text{GL}(N^1_{\bQ}(X)^G)$.\\
Assume, for simplicity, that $\text{End}_{\bR}(X)^{G}$ has a single direct factor. Using the notation of Theorem \ref{dec-Ginvarinatalgebra}: 
\begin{align*}
 &\Psi\colon \bigl(\text{End}_{\bR}(X)^{G}, \iota\bigr) \xrightarrow{\quad\simeq\quad} \bigl(\text{Mat}_{l}(\bF),\dagger \bigr)\\
 & \Psi\colon \bigl(\text{End}_{\bR}(X)^{G}\bigr)^{\times}\xrightarrow{\quad\simeq\quad} \text{GL}_l(\bF)
\end{align*}
and by Theorem \ref{Ginvariantcone} we have $${(\Psi\circ f)}\colon \text{Amp}(X)^G\xrightarrow{\quad\simeq\quad} \cP_l(\bF)$$
where $\bF = \bR,\bC,\bH$. Thus we have:
\begin{center}
\begin{tikzcd} 
\bigl(\text{End}_{\bR}(X)^{G}\bigr)^{\times}\arrow[d,"\rsimeq" ', "\Psi"]  \arrow[r,"\rho"] & \textup{Aut}(\text{Amp}(X)^{G})^{0} \arrow[d,"\lsimeq", "\Psi" ']  & \varphi \arrow[d,mapsto, "\Psi"'] \arrow[r,mapsto] & \rho(\varphi)\colon x \mapsto \iota(\varphi) \circ x \circ \varphi \arrow[d,mapsto, "\Psi"] 
\\
 GL_l(\bF) \arrow[r,"\rho'"] & \textup{Aut}(\cP_l(\bF))^{0} & M \arrow[r,mapsto] & \rho(M)\colon D \mapsto M^\dag D M 
\end{tikzcd} 
\end{center}
Theorem \ref{groupofcone} guarantees the surjectivity of $\rho$ since every automorphism in $\textup{Aut}(\cP_l(\bF))^{0}$ are of the form $D \mapsto M^\dag D M  $ with $M\in GL_l(\bF)$. The proof can be generalized in the case $\text{End}_{\bR}(X)^{G} \simeq  \prod\limits_i \text{Mat}_{l_i}(\bR) \times \prod\limits_j \text{Mat}_{m_i}(\bC) \times \prod\limits_k \text{Mat}_{n_k}(\bH)$, since for a $C=\bigoplus\limits_i C_i$ the identity component $\textup{Aut}(C)^{0}$ is isomorphic to $\prod\limits_i\textup{Aut}(C_i)^{0}$ by Theorem \ref{thm1.2}. Thus, the surjectivity of $\rho$ follows by applying the previous proof at each factor. 
\end{proof}

\begin{remark}\label{rho_alpha}
We note that by definition $\rho$ of Lemma \ref{alphaG} is nothing else that the restriction of \begin{center}
    \begin{tikzcd}
        \alpha: \text{End}_{\bR}(X)^\times\arrow[r] & \textup{GL}(\text{End}_{\bR}(X)) \\
        \varphi \arrow[r,mapsto] & \alpha(\varphi): l\mapsto \varphi'\circ l \circ \varphi
    \end{tikzcd}
\end{center}
to the $G$-invariant endomorphism. In particular since by Theorem \ref{alpha} $\alpha$ is the extension of the action of $\text{Aut}(X)$ by pull-backs on $N^1(X)$, we deduce that $\rho$ is the extension of the action of $\text{C}_{\text{Aut}(X)}(G)$ by pull-backs on $N^1(X)^G$. Therefore, by the previous Lemma we deduce:
$$ \rho(\textup{C}_{\textup{Aut}(X)}(G))=\rho(\textup{C}_{\textup{End}^\times(X)}(G)). $$
\end{remark}

\begin{proposition}\label{cent_arith}
    Let $X$ be an abelian variety and $G\le \textup{Aut}(X)$ be a finite group. Then the centralizer $\textup{C}_{\textup{Aut}(X)}(G)$ defines an arithmetic subgroup in $\textup{Aut}(\textup{Amp}(X)^G)^0$.
\end{proposition}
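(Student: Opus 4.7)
The plan is to deduce the statement directly from the three preceding results: Lemma \ref{end-arithm}, Lemma \ref{alphaG}, and Remark \ref{rho_alpha}. The essential point is that arithmeticity transfers well along surjective $\bQ$-morphisms of algebraic groups, so one can push the arithmetic structure from $\bigl(\textup{End}_\bR(X)^G\bigr)^\times$ down to $\textup{Aut}(\textup{Amp}(X)^G)^0$ via $\rho$.

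First I would recall that, by Lemma \ref{end-arithm}, the group $\bigl(\textup{End}_\bR(X)^G\bigr)^\times$ is an affine algebraic group defined over $\bQ$, and the subgroup $\textup{C}_{\textup{End}(X)^\times}(G) = \bigl(\textup{End}(X)^G\bigr)^\times$ is arithmetic in it (via the rational representation into $\textup{GL}_{2n}(\bZ)$). Next, by Lemma \ref{alphaG}, the map
\[
\rho\colon \bigl(\textup{End}_\bR(X)^G\bigr)^\times \longrightarrow \textup{Aut}(\textup{Amp}(X)^G)^0
\]
is a surjective morphism of $\bQ$-algebraic groups, where the target is a reductive $\bQ$-algebraic group (Theorem \ref{reductive}, applied to the self-dual cone $\textup{Amp}(X)^G$ of Theorem \ref{Ginvariantcone}, whose $\bQ$-structure comes from $N^1(X)^G$).

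The key step is then to invoke the standard fact from the theory of arithmetic groups (Borel, \cite[\S 7.13]{Bor19}): if $f\colon \cG_1 \to \cG_2$ is a surjective $\bQ$-morphism of algebraic $\bQ$-groups and $\Gamma \subset \cG_1(\bQ)$ is arithmetic, then $f(\Gamma)$ is arithmetic in $\cG_2$. Applying this to $f = \rho$ and $\Gamma = \textup{C}_{\textup{End}(X)^\times}(G)$ yields that $\rho\bigl(\textup{C}_{\textup{End}(X)^\times}(G)\bigr)$ is an arithmetic subgroup of $\textup{Aut}(\textup{Amp}(X)^G)^0$. Finally, by Remark \ref{rho_alpha}, this image coincides with $\rho\bigl(\textup{C}_{\textup{Aut}(X)}(G)\bigr)$, that is, with the image of the centralizer acting on $N^1(X)^G$ by pull-backs. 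This gives the desired conclusion.

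The only non-routine step is the appeal to the transfer of arithmeticity under a surjective $\bQ$-morphism; since the kernel of $\rho$ is a normal $\bQ$-subgroup (it consists, up to the identification $\Psi$, of scalar unitary matrices in each factor), the hypotheses of Borel's result are met and no further work is required. The rest is simply assembling the already-proved lemmas in the correct order.
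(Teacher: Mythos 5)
Your proposal is correct and follows essentially the same route as the paper: it combines Lemma \ref{end-arithm}, the surjectivity of $\rho$ from Lemma \ref{alphaG}, the identification $\rho(\textup{C}_{\textup{Aut}(X)}(G))=\rho(\textup{C}_{\textup{End}(X)^\times}(G))$ from Remark \ref{rho_alpha}, and Borel's theorem that the image of an arithmetic subgroup under a surjective $\bQ$-morphism of algebraic groups is arithmetic (the paper cites \cite[Remark 8.22]{Bor19} for this). The extra observation about the kernel of $\rho$ is not needed but does no harm.
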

\begin{proof}
Let us consider the $\bQ$-morphism of algebraic groups:
      $$\rho \colon \bigl(\textup{End}_{\bR}(X)^{G}\bigr)^{\times} \longrightarrow  \text{GL}\bigl(N^1(X)^G\bigr). $$
     Since, by Lemma \ref{alphaG}, $\textup{Aut}(\textup{Amp}(X)^G)^0$ is the image of $\rho$ we obtain that it is an algebraic group defined over $\bQ$. By Remark \ref{rho_alpha} we have that $\rho(\textup{C}_{\textup{Aut}(X)}(G))=\rho(\textup{C}_{\textup{End}^\times(X)}(G))$. Moreover, by Lemma \ref{end-arithm} $\textup{C}_{\textup{End}^\times(X)}(G)$ is an arithmetic subgroup of $\bigl(\textup{End}_{\bR}(X)^{G}\bigr)^{\times}$
     and since the property to be arithmetic is preserved under $\bQ$-epimorphism, see \cite[Remark 8.22]{Bor19}, we obtain that $\rho(\textup{C}_{\textup{Aut}(X)}(G))$ is an arithmetic subgroup in $\textup{Aut}(\textup{Amp}(X)^G)^0$.
\end{proof}

\subsection{Proof of Theorem \ref{main1}}
We are in position to prove Theorem \ref{main1}.

\begin{proof}[Proof of Theorem \ref{main1}]
In view of Proposition \ref{conjquotient} and Corollary \ref{conjGHV}, it is sufficient to provide a rational polyhedral cone $\Pi'\subset \bigl(\text{Nef}(X)^G\bigr)^+$ such that $\text{Amp}(X)^G\subset H\cdot \Pi'$ for some $H\le \text{N}_{\text{Aut}(X)}(G)$.
By Theorem \ref{Ginvariantcone}, $\text{Amp}(X)^G$ is a homogeneous self-dual cone.
By Proposition \ref{cent_arith}, $\rho(\textup{C}_{\textup{Aut}(X)}(G))$ is an arithmetic subgroup of $\text{Aut}(\text{Amp}(X)^{G})^{0}$. By applying Theorem \ref{thm1.1} of reductive theory: there exists a rational polyhedral cone $\Pi' \subset \bigl(\text{Nef}(X)^G\bigr)^+$ such that $\text{Amp}(X)^G\subset (\textup{C}_{\textup{Aut}(X)}(G)\cdot \Pi') $. 
\end{proof}

\bibliographystyle{plain}
\bibliography{bibliography}
 
\end{document}